\newcolumntype{C}[1]{>{\centering\let\newline\\\arraybackslash\hspace{0pt}}m{#1}}
\renewcommand\appendix{\par
\setcounter{section}{0}%
\setcounter{subsection}{0}%
\setcounter{table}{0}
\setcounter{table}{0}
\setcounter{figure}{0}
\gdef\thetable{\Alph{table}}
\gdef\thefigure{\Alph{figure}}
\gdef\thesection{\Alph{section}}
\setcounter{section}{0}}
\newtheorem{theorem}{Theorem}[section]
\newtheorem{proposition}[theorem]{Proposition}
\newtheorem{corollary}[theorem]{Corollary}
\newenvironment{arcitem}{
\begin{list}{---}{
	\topsep=1pt 
	\itemsep=1pt 
	\parsep=0pt 
	\leftmargin=19pt 
}}
{\end{list}}
\newcounter{arclist}
\newcounter{arcenum}
\newcommand{\td}{\text{d}}
\newcommand{\EE}{\mathbb{E}}
\newcommand{\JJ}{\mathcal{J}}
\newcommand{\PQ}{\mathbb{P} \otimes \mathbb{Q}}
\newcommand{\RR}{\mathbb{R}}
\newcommand{\bfi}{\textbf{i}}
\begin{document}

\begin{frontmatter}

\title{
Stable Dividends under Linear-Quadratic Optimization
}

\author[UM]{Benjamin Avanzi}
\ead{b.avanzi@unimelb.edu.au}

\author[UM,Cop]{Debbie Kusch Falden \corref{cor}}
\ead{dkf@math.ku.dk}

\author[Cop]{Mogens Steffensen}
\ead{mogens@math.ku.dk}


\cortext[cor]{Corresponding author. }

\address[UM]{Centre for Actuarial Studies, Department of Economics, Faculty of Business and Economics \\ University of Melbourne, Melbourne VIC 3010, Australia}

\address[Cop]{Department of Mathematical Sciences, University of Copenhagen \\ DK-2100 Copenhagen, Denmark}

\begin{abstract}

The optimization criterion for dividends from a risky business is most often formalized in terms of the expected present value of future dividends. That criterion disregards a potential, explicit demand for stability of dividends. In particular, within actuarial risk theory, maximization of future dividends have been intensively studied as the so-called de Finetti problem. However, there the optimal strategies typically become so-called barrier strategies. These are far from stable and suboptimal affine dividend strategies have therefore received attention recently. In contrast, in the class of linear-quadratic problems a demand for stability if explicitly stressed. These have most often been studied in diffusion models different from the actuarial risk models. We bridge the gap between these patterns of thinking by deriving optimal affine dividend strategies under a linear-quadratic criterion for a general Lévy process. We characterize the value function by the Hamilton-Jacobi-Bellman equation, solve it, and compare the objective and the optimal controls to the classical objective of maximizing expected present value of future dividends. Thereby we provide a framework within which stability of dividends from a risky business, as e.g. in classical risk theory, is explicitly demanded and explicitly obtained.

\end{abstract}

\begin{keyword}
Risk theory \sep Stability \sep Linearity \sep Stochastic control \sep Dividends

JEL codes: 
C44 \sep 
C61 \sep 
G22 \sep 
G32 \sep 
G35 

MSC classes: 
60G51 \sep 
93E20 \sep 
91G70 \sep 	
62P05 
91B30 


\end{keyword}

\end{frontmatter}

\newtheorem{remark}{Remark}[section]
\numberwithin{equation}{section}

\section{Introduction} \label{S_intro}

We formalize a dividend optimization problem within a risk theoretical framework where a demand for stability is explicitly expressed via a linear-quadratic objective function. The solution is affine dividends and, thus, we contribute with an understanding of the class of objectives where the recently studied affine dividend processes are actually optimal. We realize that the diffusion approximation of a general Lévy process is adequate under this criterion. Finally, we illustrate and discuss the performance of our optimal dividends under the standard \citet{deF57} criterion where stability is neither expressed nor obtained.

\citet{Buh70} discussed, in the context of classical actuarial risk theory, the problem of offering stability to investors providing capital to a risky business. The insurance business is a core example where risk transfer is the rational for making business the discussion goes beyond that business. Historically, in risk theory the criteria have been to minimize the probability of ruin as in \citet{AsAl10} or to maximize the expected present value of dividends until ruin as in \citet{AlTh09,Ava09}. The general endeavour of the stability problem is still a contemporary one and can be seen through the lens of modern Enterprise Risk Management, described in \citet{Tay132}. Essentially, Enterprise Risk Management is a process by which certain decisions are made (the controls) to achieve certain outcomes (the objective), possibly under certain constraints that are either external to the decision maker (e.g., coming from a regulatory environment) or internal to the decision maker (and in that case not necessarily truly distinguishable from other aspects of the objective). Such a management procedure can be advantageously presented by stylised modelling as the one developed in the prolific risk theoretical literature; see also \citet[Section 1.4, on the value of simple but tractable models]{Cai00} and \citet[on the value of ruin theory for risk managers, in particular for capital modelling]{GeLo12}

Stability criteria are most often linked to the surplus of a company, for instance, and, originally, related to minimizing the probability of ruin corresponding to the probability of a negative surplus. Historically, risk theoretical surplus models focused on insurance type dynamics where the risk is downside risk with deterministic income and stochastic losses. The classical formulation is the Cram\'er-Lundberg model, which is a compound Poisson surplus model formalized by \citet{Lun09,Cra30}. Recently, more general risky business types have been considered, for instance, where the stochastic nature is mostly on the upside as gains \citep{AvGeSh07,BaEg08}. The most general formulations are in terms of spectrally negative or positive L\'evy processes \citep{Loe08,BaKyYa13}. The generalization from the classical compound Poisson model to a more general surplus process opens up for applying the patterns of thinking far beyond the insurance business.

The objective of minimizing the probability of ruin over infinite time implies that the surplus increases without a limit. In order to resolve this issue \cite{deF57} allowed for a surplus leakage to the shareholders of the company, referred to as dividends, and formed stability criteria based on the future dividend payouts. The classical objective is to maximize the expected present value of future dividends until the company is ruined. 
This coincides with the Dividend Discount Model by \cite{Wil38}, also known as the Gordon Model in finance \citep{Gor62}. 
\citet{Loe09} and \citet{YiWe13} studied optimal dividend problems within the classical objective for spectrally negative L\'evy processes. Realistic features of both the controlled process, the control, and the objective are, still, being added to the dividend optimization criterion. \citet{AvTuWo16c} developed a list of realistic features one might want to include, in particular based on the corporate finance literature. One important and well-known aspect is that companies and investors like stable dividends \citep[see, e.g.,][]{Lin56,FaBa68,AvTuWo16c}. Unfortunately, the optimal strategies for the dividend optimization problem turn out to be relatively irregular, namely the so-called barrier strategy, hardly acceptable in practice. This was first pointed out by \citet{Ger74}, but received little attention until recently. In an attempt to address this issue, \citet{AvWo12} introduced a linear dividend strategy, in a diffusion framework, leading to mean reversion of the surplus process and much improved stability. This was generalised to affine dividend strategies by \citet{AlCa17}, in a Cram\'er-Lundberg framework, who also derived a closed-form Laplace transform of the time to ruin. Importantly, both \citet{AvWo12} and \citet{AlCa17} illustrate that affine dividend strategies perform closely to the optimal barrier strategies, but they make the surplus process much more stable. This latter point is more rigorously explored in \citet{AlCa17} by their theoretical analysis of ruin. 

In both \citet{AvWo12} and \citet{AlCa17} the linear and dividend strategies, respectively, are introduced ad hoc and not discussed as solutions to any optimal control problem. Optimal parameters, within the ad hoc specified strategy classes, that maximise the expected present value of dividends are obtained. However, affine strategies have been found optimal in different but related contexts by \citet{Cai00} and \citet{Ste06}, where objectives of linear-quadratic form (LQ optimization) are studied in the context of life insurance and pensions. Affine dividend strategies are arguably much more realistic than the usually optimal ones such as barrier strategy. In this paper, we establish a connection risk theory and the class of models typically considered there, and linear-quadratic optimization. In order to show that the linear-quadratic objective entails affine optimal strategies also in rather general risk models, we characterize the value function by the so called Hamilton-Jacobi-Bellman equation. The quadratic form of the value function in terms of the surplus leads to affinity of the optimal dividends payouts. These strategies are obviously suboptimal in relation to dividend optimization but we are able to calculate explicitly the value of the optimal affine dividends coming out of our problem. This allows for a comparison of the performance of our affine dividends in the dividens optimization problem, however, with particular attention to the fact that the classical dividend optimization problem is stopped upon ruin whereas ours is not.

The paper is organised as follows. Section \ref{Sec: setup} introduces the surplus process and the LQ objective that we propose in this paper is analysed and motivated. We derive the Hamilton-Jacobi-Bellman equation and an appropriate verification lemma in Section \ref{Sec: HJB}, along with an expression that characterizes the value function. The LQ objective is compared to the classical objective in Section \ref{Sec: comparison}, where we also study choices of benchmarks in the LQ problem and the resulting optimal dividend strategy. Numerical illustrations are provided in Section \ref{Sec: numerical}.

\section{The optimization problem} \label{Sec: setup}

\subsection{The surplus model}
We model the surplus of a company at time $t$ after distribution of dividends by the dynamics
\begin{equation}\label{E_surplus}
dX(t) =  c(t)dt + d S(t) - dD(t),
\end{equation}
where $c(t)$ is deterministic and represents the predictable modification component of the surplus due to income and expenses, $S(t)$ is stochastic and represents the aggregate random variations of the surplus due to, for instance, losses with $S(0)=0$, and $D(t)$ is the aggregated net dividends with $D(0)=0$. 

If $c(t)$ is a positive constant and $S(t)$ is a compound Poisson process with negative jumps only, then \eqref{E_surplus} has the dynamics of a Cram\'er-Lundberg process. Conversely, if $c(t)$ is a negative constant and $S(t)$ is a compound Poisson process with positive jumps only, then \eqref{E_surplus} has the dynamics of a so-called dual model \citep{MaRu04}. 

We assume $S(t)$ is the following process 
\begin{equation}
S(t) = \sum_{i=1}^{N(t)} Y_i + \varsigma W(t), \quad N(0)=W(0)=0,
\end{equation}
where $(Y_i)_{i\in\mathbb{N}}$ is i.i.d. and $Y_i$ can follow any distribution on $\mathbb{R}$, with finite first two moments,
\begin{equation}
E[Y_i^j]=p_j, \quad j=1,2,
\end{equation}
where $W(t)$ is a Brownian motion, and where $N(t)$ is an inhomogeneous Poisson process with intensity $\lambda(t)$, $t\ge 0$. Note that $S(t)$ is a L\'evy process for $\lambda(t) = \lambda$, where $N(t)$ is a homogeneous Poisson process. Such two-sided formulations are rare in the actuarial literature, but they exist; see \citet[for references with negative and positive $c(t)$, respectively]{Che11b,LaSeSe11} or \citet{ChLiWi18}.

The dividend process, $D(t)$, is not strictly increasing. Hence, we allow negative dividends, spoken of as capital injections. Furthermore, dividends and capital injections can be paid continuously or as lump sums upon jumps in $S(t)$, such that the dynamics of $D$ is given by
\begin{align*}
    \td D(t) = l(t,X(t)) \td t + i(t,X(t-)) \td N(t).
\end{align*}

\subsection{The Linear-Quadratic (LQ) objective}
\label{SubSec: LQ objective}
We consider a finite time frame $T\geq0$ and would like to consider a general objective of the form 
\begin{eqnarray*}
\min  E_{t,x}\Big[ && \text{discounted penalties for continuous dividends away from a benchmark} \\
&+&\text{discounted penalties for lump sum (discrete) dividends} \\
&+&\text{discounted penalties for the wealth process away from a benchmark} \\ 
&+&\text{subject to a constraint on terminal wealth }X(T) \quad\quad \Big],
\end{eqnarray*}
where the subscript of the expectation refers to the expectation conditional of $X(t)=x$. This is operationalised into the following value function,
\begin{eqnarray}\label{Eq: value function}
V(t,x) = \min_{l,i} E_{t,x} \bigg[ && \tfrac{1}{2} \int_t^T e^{-\delta (s-t)} \Big(l(s,X(s)) - l_0(s)-l_1(s)X(s)\Big)^2 ds  \nonumber \\
&+& \tfrac{1}{2} \int_t^T e^{-\delta (s-t)} \gamma^i(s) i(s,X(s-))^2 d N(s) \nonumber  \\
&+& \tfrac{1}{2} \int_t^T e^{-\delta (s-t)} \Big( X(s) - x_0(s)\Big)^2 d\Gamma(s)   \nonumber      \\ 
&+&\kappa e^{-\delta (T-t)} \left(X(T)-x_T\right)^\tau \quad \bigg],
\end{eqnarray}
 for $t\leq T$, where $\delta$ is a financial impatience factor. To get a better understanding of the objective behind this value function, we explain \eqref{Eq: value function} line by line. 
 \begin{arcitem}
\item The first line compares the continuous payout of dividends with an affine benchmark. Dividends are generally not paid continuously, but we use a continuous model that provides a tractable stylised formulation of a discrete real life situation; for comments about this see \citet{Cai00}. The benchmark, $l_0(s)+l_1(s)X(s)$, consist of two functions, a fixed target, $l_0(s)$, and a target which is proportional to the surplus level, $l_1(s)$. 

\item The second line accounts for lump sum payments. The lump sum payments are interpreted as extra dividends or capital injections paid on top of the regular dividends. Therefore, the only admissible lump sum payments are upon jumps in the surplus process, where an abrupt change of surplus level due to a jump may require a discrete adjustment of the surplus. 
The benchmark is zero, since we prefer not to have lump sum dividends, and we introduce a weight function $\gamma^i(s)\ge0$, $s\ge 0$ to adjust this preference. The undesirable signals of lump sum dividends are discussed in \cite{AvTuWo16} and \cite{AvPeWoYa16}. The squared function means we equally dislike lump sum dividends and capital injections, and that we prefer a series of small dividend payouts to one single large one.

\item The last two lines consider the surplus process. The third line compares the surplus with a surplus benchmark. The benchmark, $x_0(s)$, could be a result of regulatory constraints or correspond to the explicit target capitalisation of the company. Companies often set and publish such targets; see, e.g. \citet[for insurance companies]{AI16}. It is reasonable to assumes the function $x_0$ is non-negative, to not aim for the company to ruin. In order to balance this objective with the first two lines, the third line contains a mixed aggregate weight function $\Gamma(t) = \int_0^t \gamma(s) ds$, $0\le t< T$. It is written using the Riemann-Stietjes notation to allow for a final mass at termination $\Delta \Gamma(T)\ge 0$.  

\item The last line serves to control the terminal value of the surplus to a benchmark, $x_T$. The parameter $\kappa$ is a Lagrangian multiplier, and the parameter $\tau$ allows for three levels of constraints on the terminal value $X(T)$. The case $\tau=0$ corresponds to absence of constraint. If $\tau=1$, the expected value of the terminal value is $x_T$, where $\kappa$ is solved to satisfy this constraint. For $\tau=2$ the constraint is stronger and the process is forced to reach $x_T$ at time $T$ by letting $\kappa$ go to infinity. See \citet{Ste06} and \citet{Stphd01} for details.
\end{arcitem}

The final weight function mass $\Delta \Gamma(T)$ is not redundant with the last row for $\tau=2$, since the third row expresses a preference and the fourth row expresses a constraint. Therefore, they are operationalised differently, where the weight at $\Delta \Gamma(T)$ remains a finite constant, while $\kappa$ is meant to diverge in the constraint, such that $X(T)$ is exactly $x_T$. Furthermore, we can have the constraint with $\tau=1$, and use the weight $\Delta \Gamma(T)$ to express a strong preference for the terminal value of the surplus without the binding constraint of $\tau=2$. 

Except for the last line, all distances from the dividends and the surplus to the benchmarks, respectively, are penalised by a quadratic loss function. Objectives on this form are well known in the quantitative finance literature such as \cite{Wo68} and \cite{Bj09}, and optimization in this context is commonly referred to as ``linear-quadratic (LQ) optimization''. LQ optimization is most commonly formalized with an underlying diffusion process without jumps and mainly considered in the context of pensions funds within actuarial risk theory \citep{Cai00,Ste06,AvLaSt22}. It is also well known that LQ optimization results in affine optimal strategies, which induces the desire to understand the objective and the resulting dividend strategy in a broader actuarial context. The objective and optimal controls are relevant to compare to the classical objective of maximizing expected present value of future dividends. In order to show that the LQ objective leads to affine optimal strategies, we characterize the value function by differential equations, and express the value function as a quadratic function in the surplus.

\section{HJB equation and verification lemma}
\label{Sec: HJB}
\subsection{HJB equation and verification lemma for the LQ objective}

Under the assumption that the optimal control strategies exist, the value function satisfies a system of differential equations, referred to as the Hamilton-Jacobi-Bellman equation (HJB equation). The optimal control strategies are the functions $t\mapsto l(t,X(t))$ and $t\mapsto i(t,X(t))$ that minimize the value function and are predictable with respect to the filtration generated by the surplus process. The subscript of a function refers to the partial derivative with respect to that subscript i.e. $V_t(t,x) = \frac{\partial}{\partial t} V(t,x)$. 

\begin{proposition}
\label{prop: HJB equation}
Assume the value function is twice continuously differentiable, $V \in C^{1,2}$ and the optimal control strategies exist. Then the value function satisfies the HJB equation
\begin{align}
\label{eq: HJB equation}
0 =&\  V_t(t,x) - \delta V(t,x) + \inf_{l,i} \Bigg\{ \  \frac{1}{2} \Big( l(t,x) - l_0(t) - l_1(t)x\Big)^2 \nonumber \\
& \hspace{4 cm} + \frac{1}{2} \gamma^i(t) i (t,x)^2\lambda(t) \nonumber \\
    & \hspace{4 cm} + \frac{1}{2} \gamma(t) \Big( x- x_0(t)\Big)^2  \nonumber \\
    & \hspace{4 cm} + V_x(t,x) \Big( c(t) - l(t,x)\Big)\nonumber \\
     & \hspace{4 cm} + \frac{1}{2} V_{xx}(t,x) \varsigma^2 \nonumber \\
     & \hspace{4 cm} +  \lambda(t) \EE \Big[V(t, x+ Y_{1}-i(t,x)) - V(t, x) \Big]
    \Bigg\},  \end{align}
    with boundary condition
    \begin{align}
    \label{eq: HJB equation boundary condition}
        V(T,x) =&\ \kappa (x - x_T)^\tau   + \Delta \Gamma(T)(x-x_0(T))^2.
    \end{align}
For each $(t,x)\in[0,T]\times \RR$ the infimum is attained by the optimal control strategies, and $Y_1$ is one of the stochastic variables in the jump process representing a jump size.
\end{proposition}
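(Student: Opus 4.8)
The plan is to derive \eqref{eq: HJB equation} from the dynamic programming principle combined with Dynkin's formula for the controlled jump--diffusion $X$, and to obtain the boundary condition \eqref{eq: HJB equation boundary condition} by direct evaluation at $t=T$. The standing hypotheses that $V\in C^{1,2}$ and that an optimal control pair exists remove the measurable-selection and a priori regularity obstructions, so the work is essentially a careful infinitesimal expansion. First I would invoke Bellman's principle of optimality on a short interval: for $t+h\le T$,
\begin{equation*}
V(t,x)=\inf_{l,i} E_{t,x}\!\left[\int_t^{t+h} e^{-\delta(s-t)}\,\td C(s) + e^{-\delta h}V\big(t+h,X(t+h)\big)\right],
\end{equation*}
where $\td C(s)=\tfrac12(l-l_0-l_1X)^2\td s+\tfrac12\gamma^i i^2\,\td N(s)+\tfrac12\gamma(X-x_0)^2\td s$ is the accrued running cost read off from the first three lines of \eqref{Eq: value function}. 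Note that the lump-sum penalty accrues against $\td N$, not $\td s$; this is the feature that generates the intensity factor in the second line of \eqref{eq: HJB equation}.

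Next I would apply Itô's formula for jump--diffusions to $s\mapsto e^{-\delta(s-t)}V(s,X(s))$. Since $X$ has drift $c-l$, Brownian coefficient $\varsigma$, and at each jump of $N$ moves from $X(s-)$ to $X(s-)+Y_1-i$, its generator acting on $V$ is
\begin{equation*}
(\mathcal A V)(s,x)=V_t+(c-l)V_x+\tfrac12\varsigma^2 V_{xx}+\lambda(s)\,\EE\!\left[V(s,x+Y_1-i)-V(s,x)\right].
\end{equation*}
Taking $E_{t,x}$ annihilates the $\td W$ and compensated-$\td N$ martingale parts, and replaces $\td N(s)$ in both $\td C$ and the jump term by its compensator $\lambda(s)\td s$; this is precisely where the common factor $\lambda(t)$ in the HJB emerges. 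Substituting the resulting identity
\begin{equation*}
E_{t,x}\!\left[e^{-\delta h}V(t+h,X(t+h))\right]-V(t,x)=E_{t,x}\!\int_t^{t+h} e^{-\delta(s-t)}\big(-\delta V+\mathcal A V\big)(s,X(s))\,\td s
\end{equation*}
into the optimality principle yields, after dividing by $h$, letting $h\downarrow0$, and using continuity of the integrand together with $X(s)\to x$, the pointwise relation
\begin{equation*}
0=\inf_{l,i}\Big\{\tfrac12(l-l_0-l_1x)^2+\tfrac12\gamma^i i^2\lambda+\tfrac12\gamma(x-x_0)^2 -\delta V+\mathcal A V\Big\}(t,x).
\end{equation*}
Since $V_t-\delta V$ does not depend on $(l,i)$, it factors out of the infimum, giving exactly \eqref{eq: HJB equation}. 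The boundary condition follows by setting $t=T$ in \eqref{Eq: value function}: the integrals over the interior vanish and only the terminal penalty survives, namely $\kappa(x-x_T)^\tau$ together with the contribution of the terminal atom $\Delta\Gamma(T)$ of $\Gamma$, which is \eqref{eq: HJB equation boundary condition}.

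I expect the main obstacle to be the bookkeeping around the jump/lump-sum interaction rather than any deep analytic difficulty. Specifically, one must (i) verify that the stochastic integrals against $\td W$ and the compensated jump measure are genuine martingales, so that their $E_{t,x}$ vanishes --- this requires local integrability of $\varsigma V_x$ and of the jump increments $V(s,X(s-)+Y_1-i)-V(s,X(s-))$, controlled via the assumed smoothness of $V$ and the finiteness of the first two moments of $Y_1$; and (ii) justify interchanging the infimum with the limit $h\downarrow0$ and the expectation, which is where the existence of an optimal control is used. With those in hand the derivation reduces to the infinitesimal expansion above.
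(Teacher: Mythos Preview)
Your proposal is correct and follows essentially the same route as the paper: invoke the dynamic programming principle on $[t,t+h]$, apply It\^o's formula for the jump--diffusion, take expectations using the compensator of $N$, divide by $h$ and let $h\downarrow 0$. The paper economizes by citing \citet[Theorem~19.5]{Bj09} for the pure-diffusion terms and writes out the argument only for the jump component (first with $\delta=0$, then reinstating the discount via the substitution $V(t,x)=e^{\delta t}\tilde V(t,x)$), whereas you handle all terms and the discount factor simultaneously; the substance is the same.
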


\begin{proof}
See Appendix \ref{App: Proof of HJB equation}
\end{proof}

The HJB equation characterizes the value function if the conditions in Proposition \ref{prop: HJB equation} are satisfied, but the converse is also true. It is a sufficient condition such that if a function satisfies the HJB equation, it is the value function.

\begin{proposition}
\label{prop: Verification}
Assume a function $H$ satisfies the HJB equation
\begin{align*}
0 =&\  H_t(t,x) - \delta H(t,x) + \inf_{l,i} \Bigg\{ \  \frac{1}{2} \Big( l(t,x) - l_0(t) - l_1(t)x\Big)^2 \nonumber \\
    & \hspace{ 4.2 cm} + \frac{1}{2} \gamma^i(t) i(t,x)^2 \lambda(t) \nonumber \\
    & \hspace{ 4.2 cm} + \frac{1}{2} \gamma(t) \Big( x- x_0(t)\Big)^2  \nonumber \\
    & \hspace{ 4.2 cm} + H_x(t,x) \Big( c(t) - l(t,x)\Big)\nonumber \\
     & \hspace{ 4.2 cm} + \frac{1}{2} H_{xx}(t,x) \varsigma^2 \nonumber \\
     & \hspace{ 4.2 cm} +  \lambda(t) \EE \Big[H(t, x+ Y_{1}-i(t,x)) - H(t, x) \Big]
    \Bigg\},    
    \end{align*}
    with boundary condition
    \begin{align*}
         H(T,x) =&\  \kappa (x - x_T)^\tau   + \Delta \Gamma(T)(x-x_0(T))^2, 
    \end{align*}
and $H_x(t,X)\varsigma \in \mathcal{L}^2$.
Furthermore, assume that the infimum is attained by admissible control strategies $\tilde{l}$ and $\tilde{i}$ for each fixed $(t,x)$. Then the optimal value function to the control problem, $V$ from Equation \eqref{Eq: value function}, is
\begin{align*}
    V(t,x)=H(t,x),
\end{align*}
and the optimal control strategies are $l^*=\tilde{l}$ and $i^*=\tilde{i}$.
\end{proposition}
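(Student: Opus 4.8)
The plan is to run the standard verification argument: I compare $H$ to the objective functional along an arbitrary admissible pair of controls, deduce that $H$ is a lower bound for the attainable cost, and then show the bound is attained at the minimizing pair $(\tilde l,\tilde i)$. Throughout, write $J(t,x;l,i)$ for the expression inside the $\min$ in \eqref{Eq: value function}, so that $V(t,x)=\inf_{l,i}J(t,x;l,i)$.

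First I would fix an arbitrary admissible control $(l,i)$, let $X$ be the associated surplus process with $X(t)=x$, and apply the It\^o--Dynkin formula for jump diffusions to the discounted map $s\mapsto e^{-\delta(s-t)}H(s,X(s))$ on $[t,T]$. Writing the controlled dynamics as $\td X(s)=(c(s)-l(s,X(s)))\,\td s+\varsigma\,\td W(s)+(Y-i(s,X(s-)))\,\td N(s)$, the finite-variation part of the differential equals $e^{-\delta(s-t)}$ times $H_t-\delta H+H_x(c-l)+\tfrac12 H_{xx}\varsigma^2+\lambda\,\EE[H(s,X+Y_1-i)-H(s,X)]$, and the remaining terms are the Brownian integral $\int e^{-\delta(s-t)}H_x\varsigma\,\td W$ and the compensated-jump integral against $\td N-\lambda\,\td s$. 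Since the bracketed infimum in the HJB equation equals zero, for every admissible $(l,i)$ the corresponding non-minimized integrand is nonnegative, giving
\[
H_t-\delta H+H_x(c-l)+\tfrac12 H_{xx}\varsigma^2+\lambda\,\EE[H(\cdot+Y_1-i)-H]\ \ge\ -\Big(\tfrac12(l-l_0-l_1x)^2+\tfrac12\gamma^i i^2\lambda+\tfrac12\gamma(x-x_0)^2\Big).
\]
Thus the drift integrand is bounded below by minus the running cost.

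Next I would take $\EE_{t,x}$ of the integrated It\^o formula, discard the two stochastic integrals by their martingale property, and substitute the boundary datum $H(T,X(T))=\kappa(X(T)-x_T)^\tau+\Delta\Gamma(T)(X(T)-x_0(T))^2$. Rearranging yields $H(t,x)\le J(t,x;l,i)$; here the compensator turns the $\td N$ in the lump-sum penalty into $\lambda\,\td s$, matching the $\tfrac12\gamma^i i^2\lambda$ term, and the terminal mass $\Delta\Gamma(T)$ of the third line of \eqref{Eq: value function} is exactly the one absorbed into the boundary condition. Taking the infimum over $(l,i)$ gives $H(t,x)\le V(t,x)$. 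Repeating the computation with the minimizing pair $(\tilde l,\tilde i)$ — for which the HJB inequality becomes an equality pointwise — turns every inequality into an equality, so $H(t,x)=J(t,x;\tilde l,\tilde i)\ge V(t,x)$. Combining the two bounds gives $V=H$ and identifies the optimal controls as $l^*=\tilde l$, $i^*=\tilde i$.

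The main obstacle is the technical justification that the two stochastic integrals are genuine martingales, rather than merely local martingales, so that their expectations vanish. The Brownian part is controlled by the hypothesis $H_x(t,X)\varsigma\in\mathcal L^2$. For the compensated-jump part I would combine the finiteness of the first two moments of $Y_1$ with the admissibility (square-integrability) of $i$ and the regularity of $H$, introducing a localizing sequence of stopping times and passing to the limit by monotone or dominated convergence. A secondary point to verify is that the It\^o--Dynkin formula genuinely applies under the stated $C^{1,2}$ regularity and that $X(T)$ is integrable enough for the boundary term to be well defined.
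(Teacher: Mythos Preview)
Your proposal is correct and follows essentially the same verification argument as the paper: apply It\^o's formula to $e^{-\delta(s-t)}H(s,X(s))$, use the HJB inequality to bound the drift by minus the running cost for an arbitrary control (yielding $H\le J$), and then use the HJB equality at the minimizer $(\tilde l,\tilde i)$ to get $H=J(\cdot;\tilde l,\tilde i)\ge V$. The only cosmetic difference is that the paper outsources the pure-diffusion part to \citet[Theorem~19.6]{Bj09} and writes out only the jump component, whereas you treat the full jump-diffusion in one pass; your handling of the $\td N$ versus $\lambda\,\td s$ compensation in the lump-sum penalty and the martingale justification via $H_x\varsigma\in\mathcal L^2$ match the paper's steps.
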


\begin{proof}
See Appendix \ref{App: Proof of verification}
\end{proof}

The HJB equation is given as the infimum over admissible controls of a partial differential equation (PDE). By the quadratic structure of the HJB equation, the infimum is not obtained in the limits of the admissible controls going to infinity or minus infinity. Therefore, in order to find expressions for the optimal control strategies, we consider the critical point, where the partial derivatives of the expression in the curly brackets with respect to $l$ and $i$ both equal 0 
    \begin{align}
        &l^*(t) = l_0(t) + l_1(t)x + V_x(t,x), \label{Eq: optimal l}\\
        &i^* \gamma^i(t) - \EE\Big[V_x(t,x+Y_{1}-i^*)\Big]  = 0, \label{Eq: optimal i}
    \end{align}
for sufficiently regular $V$. Hence, the optimal continuous dividend payment is equal to the benchmark plus the derivative of the value function with respect to the surplus, and the optimal dividend payment upon jumps, is related to the expectation of the derivative of the value function with respect to the surplus after a jump. The optimal controls minimize the value function if the second derivative of the expression in the curly brackets with respect to $l$ and $i$ is positive. This is true for $l$, where the second derivative equals 1, but we need to make sure $\gamma^i(t) + \EE\Big[V_{xx}(t,x+Y_{1}-i)\Big] >0$. Based on these expressions, it is not clear that the dividend strategies in the LQ optimization problem have an affine structure. However, we are able next to express the value function as a quadratic function, which shows the affine optimal dividend strategies. 

\subsection{Quadratic value function and affine optimal dividend strategy}
\label{Subsec: alt representation}

The sufficient condition of satisfying the HJB equation serves as a verification lemma, such that we are able to characterize the value function by a function that satisfies the HJB equation. We guess a solution to the HJB equation based on separation of $x$ inspired by \citet{Cai00} and \citet{Ste06}.
 \begin{proposition}
 \label{Prop: alt representation}
 The value function in Equation \eqref{Eq: value function} is given by
 \begin{align}
     V(t,x) = q(t)x^2 + p(t)x + r(t)
     \label{Eq: alt value function}
 \end{align}
 where the functions $q$, $p$ and $r$ satisfy the system of ODEs stated in Appendix \ref{App: q p r} along with the stated terminal conditions. 
 \end{proposition}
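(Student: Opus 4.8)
The plan is to verify the quadratic ansatz by direct substitution into the HJB equation of Proposition~\ref{prop: HJB equation}, and then to invoke the verification result of Proposition~\ref{prop: Verification} to conclude that the resulting candidate is indeed the value function. First I would compute the partial derivatives of $V(t,x) = q(t)x^2 + p(t)x + r(t)$, namely
\begin{align*}
V_t(t,x) &= q'(t)x^2 + p'(t)x + r'(t), & V_x(t,x) &= 2q(t)x + p(t), & V_{xx}(t,x) &= 2q(t),
\end{align*}
and insert these into the first-order conditions \eqref{Eq: optimal l} and \eqref{Eq: optimal i}. Since $V_x$ is affine in $x$, the continuous control becomes $l^* = l_0(t) + l_1(t)x + 2q(t)x + p(t)$, and, crucially, $l^* - l_0 - l_1 x = V_x$ is affine in $x$. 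For the jump control, the affinity of $V_x$ turns \eqref{Eq: optimal i} into a linear equation: using $\EE[V_x(t, x + Y_1 - i)] = 2q(t)(x + p_1 - i) + p(t)$ one solves
\begin{align*}
i^*(t,x) = \frac{2q(t)\big(x + p_1\big) + p(t)}{\gamma^i(t) + 2q(t)},
\end{align*}
which is affine in $x$ and so already exhibits the claimed affine structure, provided the second-order condition $\gamma^i(t) + 2q(t) > 0$ holds.

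The crux is then to substitute $l^*$ and $i^*$ back into the curly brackets of the HJB equation and to check that the whole expression is a polynomial of degree two in $x$. Each contribution is manifestly quadratic: $\tfrac12(l^*-l_0-l_1 x)^2 = \tfrac12 V_x^2$ and $V_x(c - l^*) = V_x(c - l_0 - l_1 x) - V_x^2$ are quadratic because $V_x$ is affine; $\tfrac12\gamma(x-x_0)^2$ is quadratic; $\tfrac12 V_{xx}\varsigma^2 = q\varsigma^2$ is constant; and in the jump term, writing $x + Y_1 - i^*$ as an affine function of $x$ plus the random increment $Y_1$ and taking expectations introduces only the moments $p_1, p_2$, so that $\lambda \EE[V(t, x+Y_1 - i^*) - V(t,x)]$ and $\tfrac12\gamma^i(i^*)^2\lambda$ remain quadratic in $x$ despite the rational dependence on $i^*$. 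Collecting the coefficients of $x^2$, $x^1$ and $x^0$ and requiring each to vanish for all $x$ then yields the system of ODEs for $q$, $p$ and $r$. The coefficient of $x^2$ produces a Riccati-type equation for $q$ (after the rational terms in the lump-sum and jump contributions combine and simplify to $-2\lambda q^2/(\gamma^i + 2q)$); given $q$, the coefficient of $x$ gives a linear ODE for $p$; and the coefficient of $x^0$ gives an ODE for $r$ solved by integration. The terminal conditions follow by matching $V(T,x) = q(T)x^2 + p(T)x + r(T)$ against the boundary condition \eqref{eq: HJB equation boundary condition}; for $\tau = 2$ this gives $q(T) = \kappa + \Delta\Gamma(T)$, $p(T) = -2\kappa x_T - 2\Delta\Gamma(T)x_0(T)$ and $r(T) = \kappa x_T^2 + \Delta\Gamma(T)x_0(T)^2$, with the degenerate cases $\tau \in \{0,1\}$ handled analogously.

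Finally, I would close the argument with the verification lemma. Once a (local, $C^1$) solution $(q,p,r)$ of the ODE system with these terminal data exists, the candidate $V = qx^2 + px + r$ is $C^{1,2}$, the affine controls $l^*$ and $i^*$ are admissible, and the integrability requirement $V_x \varsigma \in \mathcal{L}^2$ together with the second-order condition $\gamma^i + 2q > 0$ can be checked; Proposition~\ref{prop: Verification} then identifies this $V$ with the value function \eqref{Eq: value function}. The main obstacle I anticipate is precisely the bookkeeping that shows the quadratic form is preserved through the jump term and the rational dependence of the optimal controls on $q$ --- in particular verifying that the $1/(\gamma^i + 2q)$ factors combine into well-defined coefficient functions and that the resulting Riccati equation for $q$ is non-degenerate (so that $\gamma^i + 2q$ stays positive), which is what ultimately guarantees both the quadratic consistency and the admissibility needed by the verification lemma.
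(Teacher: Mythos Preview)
Your proposal is correct and follows essentially the same approach as the paper: substitute the quadratic ansatz into the HJB equation, derive the affine optimal controls from the first-order conditions, collect the coefficients of $x^2$, $x$, and $x^0$ to obtain the ODE system for $q$, $p$, $r$, match the boundary condition to get the terminal data, and then invoke the verification lemma (Proposition~\ref{prop: Verification}) to conclude. The paper's proof is much terser but relies on exactly this logic, and your more detailed account of why each term in the HJB equation remains quadratic in $x$ (in particular the jump term via the moments $p_1, p_2$) is a faithful expansion of the same argument.
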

 
 \begin{proof}
 Assume the deterministic functions  $q, p$ and $r$ satisfy the system of ODEs in Appendix \ref{App: q p r}. The function $ V(t,x) = q(t)x^2 + p(t)x + r(t)$ then satisfies the HJB equation in Proposition \ref{prop: Verification} and $V_x(t,X)\varsigma \in \mathcal{L}^2$, which implies the result by Proposition \ref{prop: Verification}. The terminal conditions are obtained by considering
  \begin{align*}
    V(T,x) = \kappa (x - x_T)^\tau   + \Delta \Gamma(T)(x-x_0(T))^2
\end{align*}
for all values of $x\in \RR$.  
 \end{proof}

 The expression for the value function in Equation \eqref{Eq: alt value function} implies that the optimal controls from Equations \eqref{Eq: optimal l} and \eqref{Eq: optimal i} are affine in the surplus $x$:
\begin{align}
l^*(t) &= l_0(t) + l_1(t)x + 2q(t)x+p(t), \label{Eq: optimal l affine}\\
     i^* &= \frac{ 2q(t)x+2q(t)p_1+p(t)}{\gamma^i(t)+2q(t)}, \label{Eq: optimal i affine}
 \end{align}
 where we need the second order condition $\gamma^i(t)+2q(t)>0$ for the optimal controls to minimize the value function. Therefore, the objective described in Section \ref{SubSec: LQ objective} results in the desirable affine dividend strategies as the optimal strategies.

The HJB equation from Proposition \ref{prop: HJB equation} characterizes the value function by a PDE and expresses the dividends strategy in terms of the derivative of the value function with respect to the surplus, which is computationally demanding to calculate if even possible. The quadratic representation in Proposition \ref{Prop: alt representation} reduces the dimension of the PDE from Proposition \ref{prop: HJB equation} to a system of ordinary differential equations (ODE) and expresses the optimal dividend strategy in terms of the solutions to the ODEs and as affine functions in the surplus. The ODEs in Appendix \ref{App: q p r} fit into the class of Riccati equations. It is not certain that Riccati equations have solutions, but if a solution exists it is relatively easy to solve the system of ODEs numerically, for instance, by Runge-Kutta methods. Hence, we are able to compute the value function for any given $(t,x)\in[0,T]\times \RR$ by solving the system of ODEs, but we are in general not interested in the explicit value of the value function. We are interested in understanding the objective, and the resulting optimal dividend strategy, which can be expressed in terms of $q(t)$, $p(t)$, and $r(t)$.

\subsection{Coincidence with the HJB equation based on a diffusion surplus model}
\label{SubSec: diffusion surplus model}

The HJB equation in Proposition \ref{prop: HJB equation} is similar to the HJB equation obtained in \citet{Cai00} and \citet{Ste06}, with additional terms emerging from changes in the underlying surplus model caused by jumps and dividend payments upon the jumps. With the value function expressed as a quadratic function in Proposition \ref{Prop: alt representation}, we can express the last line of the HJB equation in terms of the derivative and second derivative of the value function with respect to the surplus 
\begin{align}
\label{Eq: expectation of jump}
    &  \lambda(t)  \EE \Big[V(t, x+ Y_{1}-i(t,x)) - V(t, x) \Big] \nonumber\\ 
    & = \EE \Big[\Big(2q(t)x+p(t)\Big)\lambda(t)(Y_1-i(t,x)) +q(t)\lambda(t)(Y-i(t,x))^2 \Big] \nonumber\\
    &= V_x(t,x)\lambda(t)(p_1-i(t,x)) +\frac{1}{2}V_{xx}(t,x)\lambda(t)(p_2 + i(t,x)^2 - 2 p_1 i(t,x)).
\end{align}

Since the objective is based on expectation and from Equation \eqref{Eq: expectation of jump}, we can model the surplus as a diffusion process without jumps and obtain the same HJB equation and optimal control strategies. 

\begin{corollary}
\label{coro: conside with surplus without jumps}
Let the surplus, $\hat{X}$, have dynamics
\begin{align*}
    \td \hat{X}(t) = \Big( c(t) -l(t,\hat{X}(t)) + \lambda(t) \big(p_1 - i(t,\hat{X}(t))\big)\Big) \td t + \Big(\sqrt{\lambda(t)\big(p_2  + i(t,\hat{X}(t))^2 - 2p_1i(t,\hat{X}(t)\big)+ \varsigma^2}\Big) \td W(t).
\end{align*}
Under the assumption that the optimal control strategies exist for value function 
\begin{align}
    \label{EQ: value function without jumps}
\hat{V}(t,x) = \min_{l,i} E_{t,x} \Bigg[ & \tfrac{1}{2} \int_t^T e^{-\delta (s-t)} \left(l(s,\hat{X}(s)) - l_0(s)-l_1(s)\hat{X}(s)\right)^2 \td s  \nonumber \\
&\ + \tfrac{1}{2} \int_t^T e^{-\delta (s-t)} \gamma^i(s) i(s,\hat{X}(s))^2 \lambda(s)\td s \nonumber  \\
&\ + \tfrac{1}{2} \int_t^T e^{-\delta (s-t)} \left( \hat{X}(s) - x_0(s)\right)^2 \td \Gamma(s)   \nonumber      \\ 
&\ +\kappa e^{-\delta (T-t)} \left(\hat{X}(T)-x_T\right)^\tau \Bigg], 
\end{align}
for $t\leq T$ and $\hat{V}\in \mathcal{C}^{1,2}$, then the value function satisfies the HJB equation 
\begin{align*}
0 =&\  \hat{V}_t(t,x) - \delta \hat{V}(t,x) + \inf_{l,i} \Bigg\{ \  \frac{1}{2} \Big( l(t,x) - l_0(t) - l_1(t)x\Big)^2 \nonumber \\
    & \hspace{4 cm} + \frac{1}{2} \gamma^i(t) i^2 (t,x)\lambda(t) \nonumber \\
    & \hspace{4 cm} + \frac{1}{2} \gamma(t) \Big( x- x_0(t)\Big)^2  \nonumber \\
    & \hspace{4 cm} + \hat{V}_x(t,x) \Big( c(t) - l(t,x)\Big)\nonumber \\
     & \hspace{4 cm} + \frac{1}{2} \hat{V}_{xx}(t,x) \varsigma^2 \nonumber \\
     & \hspace{4 cm} +  \hat{V}_x(t,x)\lambda(t)(p_1-i(t,x)) \nonumber\\
     & \hspace{4 cm}+\frac{1}{2}\hat{V}_{xx}(t,x)\lambda(t)(p_2 + i(t,x)^2 - 2 p_1 i(t,x)) ,  \end{align*}
     with boundary condition
     \begin{align*}
         \hat{V}(T,x) =&\ \kappa (x - x_T)^\tau   + \Delta \Gamma(T)(x-x_0(T))^2. 
     \end{align*}

\end{corollary}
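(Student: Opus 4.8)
The plan is to apply the Hamilton--Jacobi--Bellman derivation of Proposition~\ref{prop: HJB equation} to the controlled pure-diffusion process $\hat X$, whose dynamics contain no jump component. Under controls $(l,i)$ the process $\hat X$ is an It\^o diffusion with drift $\mu(t,x) = c(t) - l(t,x) + \lambda(t)\big(p_1 - i(t,x)\big)$ and squared volatility $\sigma^2(t,x) = \lambda(t)\big(p_2 + i(t,x)^2 - 2p_1 i(t,x)\big) + \varsigma^2$, so its infinitesimal generator acts on a smooth $f$ by $f \mapsto f_x\,\mu + \tfrac12 f_{xx}\,\sigma^2$. First I would record that this generator is well defined, i.e.\ that the quantity under the square root in the dynamics of $\hat X$ is non-negative.

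For that it suffices to note the algebraic identity $p_2 + i^2 - 2p_1 i = \EE\big[(Y_1 - i)^2\big] = \mathrm{Var}(Y_1) + (p_1 - i)^2 \ge 0$, so that $\sigma^2 \ge \varsigma^2 \ge 0$ and $\hat X$ is a genuine admissible diffusion. With this in hand I would repeat the dynamic-programming derivation underlying Proposition~\ref{prop: HJB equation}, now with the jump-integral term in the generator replaced by the second-order diffusion operator above. Discounting at rate $\delta$ and adding the three running-cost integrands of the value function \eqref{EQ: value function without jumps} reproduces, line by line, the asserted HJB equation; since the terminal cost in \eqref{EQ: value function without jumps} is identical to that in \eqref{Eq: value function}, the boundary condition $\hat V(T,x) = \kappa(x-x_T)^\tau + \Delta\Gamma(T)(x-x_0(T))^2$ follows at once.

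The point of the corollary, flagged by the section title, is that this diffusion HJB coincides with the jump-diffusion HJB of Proposition~\ref{prop: HJB equation}. To close that comparison I would invoke Equation~\eqref{Eq: expectation of jump}: because the value function is quadratic by Proposition~\ref{Prop: alt representation}, the jump term $\lambda(t)\,\EE\big[V(t,x+Y_1-i) - V(t,x)\big]$ equals exactly $V_x\,\lambda(t)(p_1-i) + \tfrac12 V_{xx}\,\lambda(t)(p_2 + i^2 - 2p_1 i)$, which are precisely the two additional lines of the diffusion HJB. Hence the two HJB systems are the same PDE with the same boundary data, so by the verification result of Proposition~\ref{prop: Verification} the value functions agree, $\hat V = V$, and the optimal controls \eqref{Eq: optimal l affine}--\eqref{Eq: optimal i affine} are shared.

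The main obstacle is conceptual rather than computational: the coincidence is not an accident but rests entirely on the quadratic form of $V$. A quadratic function's expected increment across a jump depends only on the first two moments of $Y_1 - i$, and the diffusion is constructed so that its drift and instantaneous variance match exactly those two moments; this moment-matching is where I would take care, since for a value function that was not quadratic the jump term would involve higher moments and the two models would no longer produce the same HJB.
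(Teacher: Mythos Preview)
Your argument is correct and follows the same route as the paper, which simply observes that $\hat X$ is a controlled It\^o diffusion and invokes the standard HJB theorem for such problems (the paper cites \citet[Theorem 19.5]{Bj09} directly rather than re-deriving it). Your explicit check that $p_2 + i^2 - 2p_1 i = \EE[(Y_1-i)^2] \ge 0$, ensuring the diffusion coefficient is well defined, is a useful addition that the paper omits; conversely, your third and fourth paragraphs (the identification $\hat V = V$ via Equation~\eqref{Eq: expectation of jump} and the moment-matching discussion) go beyond what Corollary~\ref{coro: conside with surplus without jumps} actually asserts---that material belongs to the surrounding discussion and to Corollary~\ref{coro: verification conside with surplus without jumps}, and the verification step you need there is the diffusion verification (Corollary~\ref{coro: verification conside with surplus without jumps}), not Proposition~\ref{prop: Verification}.
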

\begin{proof}
The surplus, $\hat{X}$, fits into the class of diffusion processes, where the drift and diffusion terms are deterministic functions of time, the surplus, and the control strategies. Furthermore, the value function is given by the expectation of Lebesgue integrals and functions of the terminal value of the surplus, since the Riemann-Stietjes integral only allow a final mass $\Delta \Gamma(T) \leq 0$. Therefore, the value function can be expressed in the form of the value function considered in \citet[Chapter 19]{Bj09} with the inclusion of a discount factor, and \citet[Chapter 19, Theorem 19.5 ]{Bj09} gives the result. 
\end{proof}
In Corollary \ref{coro: conside with surplus without jumps} the HJB equation is a necessary condition for the value function, but the HJB equation also take measures as a sufficient condition.
\begin{corollary}
\label{coro: verification conside with surplus without jumps}
Assume a function $\hat{H}$ satisfies the HJB equation
\begin{align*}
0 =&\  \hat{H}_t(t,x) - \delta \hat{H}(t,x) + \inf_{l,i} \Bigg\{ \  \frac{1}{2} \Big( l(t,x) - l_0(t) - l_1(t)x\Big)^2 \nonumber \\
    & \hspace{4 cm} + \frac{1}{2} \gamma^i(t) i^2 (t,x)\lambda(t) \nonumber \\
    & \hspace{4 cm} + \frac{1}{2} \gamma(t) \Big( x- x_0(t)\Big)^2  \nonumber \\
    & \hspace{4 cm} + \hat{H}_x(t,x) \Big( c(t) - l(t,x) + \lambda(t)(p_1-i(t,x))\Big)\nonumber \\
     & \hspace{4 cm} + \frac{1}{2} \hat{H}_{xx}(t,x)\Big( \varsigma^2 + \lambda(t)(p_2 + i(t,x)^2 - 2 p_1 i(t,x))  \Big),\nonumber \\   
    \end{align*}
    with boundary condition
    \begin{align*}
         \hat{H}(T,x) =&\  \kappa (x - x_T)^\tau   + \Delta \Gamma(T)(x-x_0(T))^2, 
    \end{align*}
and $\hat{H}_x(t,x)\varsigma \in \mathcal{L}^2$.
Furthermore, assume that the infimum is attained by admissible control strategies $\tilde{l}$ and $\tilde{i}$ for each fixed $(t,x)$. Then the optimal value function to the control problem, $\hat{V}$ from Equation \eqref{EQ: value function without jumps}, is
\begin{align*}
    \hat{V}(t,x)=\hat{H}(t,x),
\end{align*}
and the optimal control strategies are $l^*=\tilde{l}$ and $i^*=\tilde{i}$.
\end{corollary}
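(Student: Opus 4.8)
The plan is to run the standard verification argument for controlled diffusions, which is the diffusion analogue of the sufficiency proof behind Proposition \ref{prop: Verification}; since the jumps have already been integrated out in Corollary \ref{coro: conside with surplus without jumps}, the only martingale term that survives is the Brownian one, so the argument is strictly simpler than in the jump case. Fix $(t,x)\in[0,T]\times\RR$, let $(l,i)$ be an arbitrary admissible control, and let $\hat{X}$ be the associated diffusion from Corollary \ref{coro: conside with surplus without jumps} started at $\hat{X}(t)=x$. Writing $\mu$ and $\sigma$ for its drift and diffusion coefficients, I would apply It\^o's formula to the discounted process $s\mapsto e^{-\delta(s-t)}\hat{H}(s,\hat{X}(s))$ on $[t,T]$; using $\hat{H}\in\mathcal{C}^{1,2}$, the finite-variation part of this process is $e^{-\delta(s-t)}\big(\hat{H}_t-\delta\hat{H}+\mu\hat{H}_x+\tfrac12\sigma^2\hat{H}_{xx}\big)$, which is exactly the non-minimised integrand of the HJB equation evaluated at the control $(l,i)$.

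The key step is to read the HJB equation as a family of pointwise inequalities. Because $\hat{H}$ solves it with the infimum taken over all admissible controls, for every fixed $(l,i)$ one has $\hat{H}_t-\delta\hat{H}+\mu\hat{H}_x+\tfrac12\sigma^2\hat{H}_{xx}\ge-(\text{running cost at }(l,i))$, with equality when $(l,i)=(\tilde l,\tilde i)$, where the running cost is the sum of the three quadratic penalty terms of \eqref{EQ: value function without jumps}. Integrating the It\^o expansion from $t$ to $T$, taking $E_{t,x}$, and discarding the Brownian stochastic integral (a true martingale, hence of zero mean, under the stated integrability), I obtain
\[
\hat{H}(t,x)\le E_{t,x}\!\Big[\int_t^T e^{-\delta(s-t)}(\text{running cost})\,ds+e^{-\delta(T-t)}\hat{H}(T,\hat{X}(T))\Big].
\]
Substituting the terminal condition $\hat{H}(T,\cdot)=\kappa(\cdot-x_T)^\tau+\Delta\Gamma(T)(\cdot-x_0(T))^2$ identifies the right-hand side with the cost functional of \eqref{EQ: value function without jumps} under $(l,i)$, so taking the infimum over admissible controls gives $\hat{H}\le\hat{V}$. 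Repeating the computation with $(\tilde l,\tilde i)$ turns every inequality into an equality, whence $\hat{H}(t,x)$ equals the cost attained by $(\tilde l,\tilde i)$ and therefore $\hat{H}(t,x)\ge\hat{V}(t,x)$; combining the two bounds yields $\hat{V}=\hat{H}$ and optimality of $(\tilde l,\tilde i)$.

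The main technical obstacle is the martingale property of the stochastic integral $\int_t^\cdot e^{-\delta(s-t)}\sigma\,\hat{H}_x\,dW$. The hypothesis only records $\hat{H}_x\varsigma\in\mathcal{L}^2$, whereas the true diffusion coefficient $\sigma=\sqrt{\lambda(p_2+i^2-2p_1i)+\varsigma^2}$ also carries the control $i$; I would therefore either incorporate $\sigma\,\hat{H}_x\in\mathcal{L}^2$ into the definition of admissibility, or exploit that for the quadratic candidate of Proposition \ref{Prop: alt representation} the gradient $\hat{H}_x$ is affine in $x$ and the admissible $i$ keeps $\sigma\,\hat{H}_x$ square-integrable, so that the integral is a genuine $(\mathcal{F}_s)$-martingale. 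Beyond this integrability bookkeeping the argument is routine, and, exactly as in Corollary \ref{coro: conside with surplus without jumps}, the whole statement can alternatively be quoted from the verification theorem accompanying \citet[Chapter 19]{Bj09}, since $\hat{X}$ is a standard controlled diffusion and the terminal reward is a continuous function of $\hat{X}(T)$.
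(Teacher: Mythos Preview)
Your proposal is correct and aligns with the paper's approach: the paper's proof consists solely of the observation that $\hat{X}$ is a standard controlled diffusion with Lebesgue running cost and terminal cost, and then invokes \citet[Chapter 19, Theorem 19.6]{Bj09} directly---precisely the citation you offer at the end. Your explicit It\^o/verification computation is exactly the content of that theorem, so you have simply unpacked what the paper quotes; your remark on the $\mathcal{L}^2$ integrability of $\sigma\,\hat{H}_x$ versus the stated hypothesis $\varsigma\,\hat{H}_x\in\mathcal{L}^2$ is a fair technical caveat that the paper does not address.
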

\begin{proof}
The surplus, $\hat{X}$, fits into the class of diffusion processes and the value function is given by the expectation of Lebesgue integrals and functions of the terminal value of the surplus. Hence, the value function can be expressed in the form of the value function considered in \citet[Chapter 19]{Bj09} with the inclusion of a discount factor, and \citet[Chapter 19, Theorem 19.6]{Bj09} gives the result. 
\end{proof}

The optimal dividend strategies that gives the infimum in the value function is given by Equations \eqref{Eq: optimal l affine} and \eqref{Eq: optimal i affine}. We see that the characterization of the value function and the optimal control in Corollary \ref{coro: verification conside with surplus without jumps} coincides with that of the original setup with jumps. Thus, for every model with jumps, there exists a pure diffusion model, constructed along the lines of Corollary \ref{coro: conside with surplus without jumps} and Corollary \ref{coro: verification conside with surplus without jumps}, such that the value function and the control are the same as for the model with jumps. Therefore, we consider from now on only pure diffusion models, and with reference to Corollary \ref{coro: conside with surplus without jumps} and Corollary \ref{coro: verification conside with surplus without jumps} we can say that this is without loss of generality towards inclusion of jumps in the model. We emphasize that this is due to the specific linear-quadratic form of the objective function and does not hold for a general objective function.

\section{Comparison to the classical objective} \label{Sec: comparison}
\subsection{The objectives}



In the introduction, we explained how \citet{deF57} advocated the use of the expected present value of dividend until ruin as a criterion of stability, and how this corresponded to the most basic model in finance for calculating the value of a project (NPV) or company \citep[Gordon model; see][]{Gor62}. This emphasizes focus on the expected present value of future dividends until ruin
\begin{align*}
    V^b(t,x) = \EE_{t,x}\Bigg[\int_t^{\tau_x} e^{-\tilde{\delta} (s-t)} \td D(s)\Bigg],
\end{align*}
where $\tau_x=\inf \{s\geq t : X(s) = 0 | X(t)=x\}$ is the time of ruin and $\tilde{\delta}$ is a financial impatience factor, not necessarily equal to $\delta$. 

In the classical set up, the company aims to distribute dividends such that the shareholders' dividend payouts, $V^b$, is maximized. The optimal dividend strategy that maximize $V^b$ is in general the band strategy, and reduces to the barrier strategy depending on the attainable values of $Y_i$ \citep[first studied by][in a discrete set up; references in continuous framework are provided in the introduction]{Mor66}. 
Realistic features for the dividend strategy are smooth and stable payouts that increase according to the surplus, and the dividend process is preferably non-decreasing. This encouraged the development of affine dividend strategies, which are shown in \cite{AvWo12} and \cite{AlCa17} to be close the barrier strategy, while improving stability significantly. Therefore, the affine dividend strategy is attractive even though, it is not the optimal dividend strategy in the classical objective.  In those papers, whether affine dividend strategies were mathematically optimal in some sense remained an open problem; we shed some light on this.

The value function in this paper is based on minimizing the dividend and surplus deviation from benchmarks by a quadratic loss function as described in Section \ref{SubSec: LQ objective}. This results in an optimal dividend strategy that is actually affine, such that we are able to obtain the affine dividend strategy as an optimal strategy, but the objective is not to maximize the present value of future dividends. The objective for the value function in the linear-quadratic optimization is to punish deviation from a benchmark, and thereby controlling the dividends and the surplus towards a target dividend payout and target surplus respectively. The explicit value of the value function does not assesses the companies financial situation as opposed to the expected present value of future dividends. We are interested in understanding the objective behind the value function and the resulting optimal dividend strategy. In other words the objective that is optimised focuses on what it incentivizes and  what the actual outcomes are, not the numerical value of the (quadratic) objective to minimize. 

There is no obvious way to compare the two approaches, as the objectives behind the value functions are different, making the value functions incomparable. One is not a special case of the other. Therefore, one goal of this paper is to understand the objective the company \textit{indirectly} optimises when it chooses to implement affine dividends as the optimal strategy. 
We examine how this relates to the classical objectives in various aspects. In order to compare the optimal dividend strategy under the LQ objective with that under the classical objective, and compare the present value of future dividends for each of the strategies, we need to determine suitable choices for the benchmarks.

In Section \ref{SubSec: benchmarks} we start by determining the expected present value of dividends in our context, but discuss why it is not reasonable to try and maximise it for comparison purposes, because of absence or ruin (and transaction costs). In the following section we explain how our benchmarks are determined by connecting our framework with the existing literature, which maximise the expected present value of dividends \textit{until ruin}.

\subsection{Benchmarks and the expected present value of future dividends}
\label{SubSec: benchmarks}
As argued earlier, the company may have a target dividend payout \citep[e.g.,][]{ShSt84,Kum88} and target surplus \citep[e.g.,][]{AI16}, and it would be natural for the company to distribute funds according to the optimal dividend strategy in the LQ problem with the targets as benchmarks. In what follows, we determine the expected present value of future dividends and how different choices of benchmarks affect this value.

In the following calculations, the surplus model and dividend payments are restricted to continuous payments, and we assume that the benchmark coefficients are constants. The optimal dividend strategy is then given by
\begin{align}
\label{Eq: optimal dividen strategy}
    l^*(t,x) = l_0 + p(t) + \big(l_1 + 2q(t)\big) x,
\end{align}
where $q(t)$ and $p(t)$ solves the differential equations from Proposition \ref{Prop: alt representation}, and where the surplus has dynamics
\begin{align}
\label{Eq: dynamic surplus with optimal dividend}
    \td X(t) = \big(c -l_0-p(t)\big)\td t - \big(l_1+2q(t)\big)X(t) \td t + \varsigma \td W(t).
\end{align}

We are interested in calculating the expected present value of future dividends, when the dividends are distributed according to the optimal dividend strategy in the LQ objective. This quantity,  
\begin{align}
\label{Eq: PV divd}
    V^{LQ}(t,x) = \EE_{t,x}\Bigg[\int_t^{T} e^{-\tilde{\delta} (s-t)} \bigg(l_0 + p(s) + \big(l_1 + 2q(s)\big) X(s) \bigg)\td s\Bigg]
\end{align}
up to a given time $T$, can be calculated via a PDE. The objective to maximize this function resembles the value function considered in \cite{AvWo12} and \cite{AlCa17} as discussed in Section \ref{SubSec: sub-optimal control problem}. \cite{AvWo12} and \cite{AlCa17} consider optimal parameters that maximize the expected present value of future dividends until ruin, where the dividend strategy is restricted to linear and affine in the surplus respectively. This is different from maximizing \eqref{Eq: PV divd} by the choice of benchmarks without stopping the process at ruin, even though the dividend strategy is affine.

\begin{proposition}
Assume $V^{LQ}(t, x)\in \mathcal{C}^{1,2}$. Then the expected present value of future dividends satisfies the following partial differential equation
\begin{align}
   V_t^{LQ}(t, x) =&\ \tilde{\delta} V^{LQ}(t, x) - l_0 - p(t) -\big(l_1 + 2q(s)\big) x - V_x(t,x) \bigg(c-l_0-p(t)-\big(l_1 + 2q(s)\big) x \bigg) \nonumber \\
   &\ -\frac{1}{2}V_{xx}(t,x) \varsigma^2, \nonumber \\
    V^{LQ}(T, x) =&\ 0.\label{Eq: PDE PV divd}
\end{align}

Conversely, if a function satisfies the partial differential equation above, it is indeed the expected present value of future dividends defined in Equation \eqref{Eq: PV divd}.
\label{Prop: Partial differential equation of PV divd}
\end{proposition}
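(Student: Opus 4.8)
The plan is to recognise Proposition~\ref{Prop: Partial differential equation of PV divd} as a Feynman--Kac representation for the linear (Ornstein--Uhlenbeck type) diffusion \eqref{Eq: dynamic surplus with optimal dividend}. Writing the drift as $\mu(t,x) = c - l_0 - p(t) - (l_1+2q(t))x$, the infinitesimal generator acting on a test function $g \in \mathcal{C}^{1,2}$ is $\mathcal{A}_t g(t,x) = \mu(t,x) g_x(t,x) + \tfrac{1}{2}\varsigma^2 g_{xx}(t,x)$, and the running reward is $f(t,x) = l_0 + p(t) + (l_1+2q(t))x$. With this notation the asserted PDE \eqref{Eq: PDE PV divd} is exactly $V^{LQ}_t + \mathcal{A}_t V^{LQ} - \tilde{\delta}\, V^{LQ} + f = 0$ on $[0,T)\times\RR$, together with $V^{LQ}(T,x)=0$. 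I would prove the two directions separately, both via Itô's formula applied to the discounted value along the surplus path.

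For the forward direction, fix $(t,x)$ and assume $V^{LQ}\in\mathcal{C}^{1,2}$. The key observation is that, by the Markov property of $X$ and the tower property, the process $Z(s) := e^{-\tilde{\delta}(s-t)} V^{LQ}(s,X(s)) + \int_t^s e^{-\tilde{\delta}(u-t)} f(u,X(u))\,\td u$ equals $\EE_{t,x}\!\big[\int_t^T e^{-\tilde{\delta}(u-t)} f(u,X(u))\,\td u \,\big|\, \mathcal{F}_s\big]$ and is therefore an $\{\mathcal{F}_s\}$-martingale on $[t,T]$. Applying Itô's formula to $e^{-\tilde{\delta}(s-t)} V^{LQ}(s,X(s))$ gives $\td Z(s) = e^{-\tilde{\delta}(s-t)}\big(V^{LQ}_t + \mathcal{A}_s V^{LQ} - \tilde{\delta}\, V^{LQ} + f\big)\,\td s + e^{-\tilde{\delta}(s-t)}\varsigma V^{LQ}_x\,\td W(s)$. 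Since $Z$ is a martingale, its finite-variation part must vanish identically, which is precisely the PDE; the terminal condition $V^{LQ}(T,x)=0$ is immediate from the definition \eqref{Eq: PV divd}, as the integral over $[T,T]$ is empty.

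For the converse, let $G\in\mathcal{C}^{1,2}$ solve the PDE with $G(T,\cdot)=0$ and suppose $\varsigma G_x(s,X(s))\in\mathcal{L}^2$. Applying Itô's formula to $e^{-\tilde{\delta}(s-t)} G(s,X(s))$ on $[t,T]$, the PDE forces the drift to collapse to $-e^{-\tilde{\delta}(s-t)} f(s,X(s))$, so that $e^{-\tilde{\delta}(T-t)} G(T,X(T)) - G(t,x) = -\int_t^T e^{-\tilde{\delta}(s-t)} f(s,X(s))\,\td s + \int_t^T e^{-\tilde{\delta}(s-t)}\varsigma G_x(s,X(s))\,\td W(s)$. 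Taking $\EE_{t,x}$, the stochastic integral vanishes under the $\mathcal{L}^2$ condition and $G(T,X(T))=0$, leaving $G(t,x) = \EE_{t,x}\!\big[\int_t^T e^{-\tilde{\delta}(s-t)} f(s,X(s))\,\td s\big] = V^{LQ}(t,x)$, as required.

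The only genuine technical point --- and the place I would be most careful --- is ensuring the stochastic integrals are true martingales rather than merely local ones, i.e.\ that $\varsigma V^{LQ}_x$ (resp.\ $\varsigma G_x$) lies in $\mathcal{L}^2$. In the forward direction this comes for free: because $f$ is affine in $x$ and \eqref{Eq: dynamic surplus with optimal dividend} is a linear SDE, $\EE_{t,x}[X(s)]$ is affine in $x$, hence $V^{LQ}$ is affine in $x$, so $V^{LQ}_x$ is a deterministic function of time, bounded on $[t,T]$, and the integrability is trivial. In the converse I would carry $\varsigma G_x\in\mathcal{L}^2$ as a (mild) standing regularity assumption on admissible solutions. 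Modulo these standard estimates the statement is a direct instance of the Feynman--Kac theorem (see, e.g., \citet{Bj09}) specialised to a finite horizon with discount rate $\tilde{\delta}$ and no terminal reward.
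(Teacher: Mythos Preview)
Your proposal is correct and follows essentially the same argument as the paper's proof: both construct the martingale $\int_t^s e^{-\tilde\delta(u-t)}f(u,X(u))\,\td u + e^{-\tilde\delta(s-t)}V^{LQ}(s,X(s))$ (the paper uses the undiscounted variant starting from time $0$), apply It\^o's formula, and read off the PDE from the vanishing drift, then reverse the computation for the converse. Your explicit observation that $V^{LQ}$ is affine in $x$, so that $\varsigma V^{LQ}_x$ is deterministic and the $\mathcal{L}^2$ condition is automatic in the forward direction, is a welcome clarification that the paper leaves implicit.
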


\begin{proof}
See Appendix \ref{App: Proof of PV divd}.
\end{proof}

Similar to Section \ref{Subsec: alt representation}, we can express the expected present value of future dividends by a function that satisfies the PDE in Proposition \ref{Prop: Partial differential equation of PV divd}. We guess a solution 
\begin{align*}
    V^{LQ}(t, x) = f(t)x + g(t),
\end{align*}
where $f(t)$ and $g(t)$ satisfy the following differential equations
\begin{align*}
    f_t(t) =& f(t) \big( \tilde{\delta} + l_1 + 2q(t) \big) - l_1 - 2q(t) \\
    g_t(t) = & g(t) \tilde{\delta} + f(t) \big(l_0 + p(t) - c\big) - l_0 - p(t),
\end{align*}
with terminal conditions $f(T) = 0$ and $g(T)=0$. This function satisfies the PDE in Proposition \ref{Prop: Partial differential equation of PV divd} and is therefore the expected present value of future dividends. Hence, we can calculate the expected present value of future dividends for the optimal dividend strategy in the LQ problem by solving the ODEs for $f(t)$ and $g(t)$.

In order to determine suitable choices for the benchmarks, we consider how different alternatives affect the expected present value of future dividends. Figure \ref{Fig: Benchmark plot} illustrates the expected present value of future dividends, up to time $T=200$, for different benchmark values. We fix the values of the parameters that are not varied to Table \ref{Table: Parameters in example}  \citep[inspired by][]{AvWo12}. A low benchmark of the surplus causes the company to distribute a lot of dividends resulting in a high expected present value of future dividends. Similarly, larger benchmarks for the dividends in the form of either $l_0$ or $l_1$ cause the company to distribute more dividends. There is no constraint on the surplus being non-negative or the dividends being finite and non-negative. Therefore, based on Figure \ref{Fig: Benchmark plot}, the expected present value of future dividends is maximized by letting $x_0$ be zero and letting either $l_0$ or $l_1$ increase without bounds. This is unreasonable in practice, since the company is ruined if the surplus is negative.

\begin{figure}[htb]
    \centering
    \includegraphics[scale = 0.8]{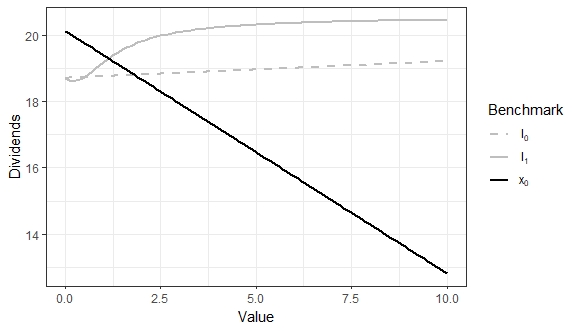}
    \caption{The expected present value of future dividends for different values of benchmarks. The grey and dashed line varies $l_0$, the grey and solid line varies $l_1$ and the black line is for varying values of $x_0$}
    \label{Fig: Benchmark plot} 
\end{figure}

\begin{table}[htb]
\begin{center}
\begin{tabular}{ l| l }
\hline
\textbf{Component} &  \textbf{Value} \\ \hline
$c(t)$ & 1 \\ 
$\varsigma$ & 0.5 \\ 
$\delta$  & 0.05 \\
$\tilde{\delta}$ & 0.05 \\
$T$ & 200 \\
X(0) & 0.628\\
    $l_0$ & 0 \\
 $l_1$ & 1/1.884\\ 
 $x_0$ & $1.884 $ \\
 $w^x(t)$ & 1 \\
 $\gamma$ & 0 \\
 $k$ & 0 \\
 $b^*$ & 1.256\\
\hline
\end{tabular}
\caption{Values of the components.}
\label{Table: Parameters in example}
\end{center}
\end{table}

The benchmarks and parameters in Table \ref{Table: Parameters in example} are the optimal linear strategy in \cite{AvWo12}. In order to get a better understanding of the resulting optimal affine dividend strategy in the LQ problem, where there is no constraint on non-negative dividends, and to compare it with linear strategy in \cite{AvWo12}, we illustrate the coefficient functions for the optimal dividend strategy $t \mapsto l_0 + p(t)$ and $t\mapsto l_1+2q(t)$, along with the benchmarks, for the parameters in Table \ref{Table: Parameters in example}. 

\begin{figure}[H]
    \centering
    \includegraphics[scale = 0.8]{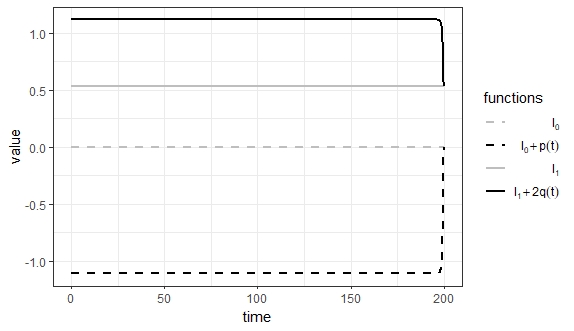}
    \caption{The coefficient functions for the optimal dividend strategy and the benchmarks. The grey lines are the benchmarks $l_0$ and $l_1$, the black lines are the coefficient functions. The solid lines are multiplied with the surplus in the dividend strategy and the dashed lines are added. }
    \label{Fig: coef plot} 
\end{figure}

Apart from termination, where the surplus benchmark is zero, the coefficients of the optimal control seem constant in Figure \ref{Fig: coef plot}. This is in correspondence with \citet{Cai00} for $T\rightarrow \infty$, because of the Markov property and the time-homogeneous objective. Based on Figure \ref{Fig: coef plot} a larger proportion of the surplus is distributed as dividends compared to the benchmark $l_1$.  It is adjusted by the part that is not multiplied with the surplus, $l_0 + p(t)$, which is negative. A negative value of dividends corresponds to a capital injection, and increases the surplus. Therefore, if the surplus is close to zero, the dividends are negative and a capital injection increases the surplus. The LQ objective is based on minimizing quadratic deviation, and therefore, it is indifferent if the dividends and surplus are above or below the benchmarks respectively. It would be reasonable to constrain the dividends to be non-negative in the objective or punish capital injection with a higher weight, but this would not lead to affine dividend strategies as studied in \citet{Stphd01}. It is a drawback in the LQ optimization problem that we do not avoid ruin or stop paying dividends after ruin, and there is no restriction on non-negative dividends. We do not incorporate suitable constraints to avoid the drawbacks, since the resulting optimal dividend strategy would not be affine.

\subsection{The sub-optimal control problem}
\label{SubSec: sub-optimal control problem}

In the classical objective the company aims to maximize the expected present value of future dividends until ruin, but for practical reasons the company is not interested in using the optimal dividend strategy, the barrier strategy \citep{Ger72}. The absence of constraint on non-negative surplus in the LQ objective makes it undesirable to use the benchmarks that maximize the expected present value of future dividends with the LQ optimal dividend strategy. \cite{AvWo12} and \cite{AlCa17} study the control problem of maximizing the expected present value of future dividends until ruin, where the dividend strategy is restricted to linear and affine in the surplus respectively 
\begin{align}
\label{Eq: mean reverting value function}
     \max_{\tilde{l}_0, \tilde{l}_1}\EE_{t,x}\Big[\int_t^{\tau_x} e^{-\tilde{\delta} (s-t)} \ \big(\tilde{l}_0 + \tilde{l}_1 X(s)\big)\ \td s\Big].
\end{align}
This results in optimal parameters, but the dividend strategy is not the optimal solution form of the classical optimal control problem, since the strategy class is specified. Both papers give an explicit expression for Equation \eqref{Eq: mean reverting value function} and numerically solve for the parameters that maximize this expression. By \cite{AlCa17} the optimal value of $\tilde{l}_0$ is zero in different numerical experiments \citep[reducing the problem to the framework of][]{AvWo12}. Hence, it is not desirable to pay immediate and fixed dividends when the surplus is close to zero. The objective is different from maximizing \eqref{Eq: PV divd}, since no more dividends are distributed if the company ruins, dividends are non-negative and the parameters in the affine dividends strategy are not maximized through quadratic differences to benchmarks.

The sub-optimal dividend strategies in \cite{AvWo12} and \cite{AlCa17} are close to the optimal barrier strategy and improve stability. We consider the optimal parameters from \cite{AvWo12} and \cite{AlCa17} as a suggestion for benchmarks, such that the benchmarks are the parameters that for an affine dividend strategy maximize the expected present value of future dividends until ruin. 

The surplus is an Ornstein-Uhlenbeck process when the dividend strategy is a continuous rate $\tilde{l}_1>0$ of the surplus
\begin{align}
    \td X(t) = c\ \td t - \tilde{l}_1X(t)\ \td t + \varsigma\ \td W(t) .
\end{align}
It reverts around the level $ c/\tilde{l}_1$, since the drift is positive, when the surplus is below the level, and negative above. The dividend strategy then reverts around $c$ and the strategy is therefore referred to as the mean reverting dividend strategy in \cite{AvWo12}. With the parameters of the optimization problem in Equation \eqref{Eq: mean reverting value function} as benchmarks in the LQ optimization problem, a suggestion for surplus benchmark is
\begin{align}
\label{Eq: surplus mean reversion benchmark}
    \tilde{x}_0= \frac{c}{\tilde{l}_1}.
\end{align}

The optimal dividend strategy for the LQ objective \eqref{Eq: optimal dividen strategy} implies that for $l_1+2q(t)>0$, the surplus has a positive drift at time $t$, when the surplus is below
\begin{align}
\label{Eq: surplus mean reversion}
    \frac{c -l_0-p(t)}{l_1+2q(t)},
\end{align}
and a negative drift, when the surplus is above Equation \eqref{Eq: surplus mean reversion}. Furthermore, the optimal dividend strategy increases and decreases accordingly to the surplus such that the dividend strategy reverts around $c$, the deterministic predictable modification component of the surplus. This is similar to the dividend strategy in \cite{AvWo12} and yields a smoother dividend flow around an annual rate $c$. In the next section we consider the optimal dividend strategy for the LQ objective with the optimal parameters from \cite{AvWo12} and \cite{AlCa17} as benchmarks in a numerical study.

\section{Numerical study}
\label{Sec: numerical}

\subsection{Present value of dividends by simulation}
\label{SubSec: simulation study}

Here we compare the barrier strategy and the mean reverting strategy to the dividend strategy in the LQ problem, where the mean reverting strategy is used as benchmark. 

We use the parameters in Table \ref{Table: Parameters in example} given in subsection \ref{SubSec: benchmarks}. The optimal barrier \citep[the one that maximises the expected present value of dividends until ruin, see][]{Ger72} for these values is $b^*=1.256$, and for $l_0=0$ the optimal level for the mean reverting strategy is $x_0= 1.884$ with $l_1=c/x_0$ \citep[the one that maximises the expected present value of dividends until ruin when a mean reverting strategy is forced, and pre-committed until ruin; see][]{AvWo12} and Equation \eqref{Eq: surplus mean reversion benchmark}. We simulate 2500 paths of the surplus for each of the three different dividend strategies using an Euler scheme with discretization step of $1/400$ up to time $T$. Furthermore, the surplus paths are stopped for the barrier and mean reverting strategy as soon as the surplus is negative or 0. All differential equations are solved using Runge-Kutta 4th order method, with the same discretization step as the simulated paths. 

Figure \ref{Fig: hexplot AFS b} displays a hexbin plot of 2500 outcomes of the present value of dividends for the barrier strategy and LQ strategy, and Figure \ref{Fig: hexplot AFS AW} displays a hexbin plot of 2500 outcomes of the present value of dividends for the mean reverting strategy and LQ strategy. The points above the 45 degrees line are the simulations, where the LQ optimal strategy outperforms the other strategy (respectively). The band around 10-20 on the vertical axis are the cases where the company is ruined before time $T$. In most cases the barrier strategy results in a higher present value of dividends, which is expected as it is the optimal dividend strategy. Figure \ref{Fig: hexplot AFS AW} shows that the LQ problem results in similar present value of future dividends as the mean reverting strategy, except for the cases where the company is ruined. Note the surplus for the strategy of the LQ objective is never negative, since the small discretization steps and negative values of $l_0+p(t)$ prevent this. Furthermore, there are only 6 paths out of the 2500 simulated paths, where no capital injection occurs with the LQ strategy.

\begin{figure}[H]
    \centering
    \includegraphics[scale = 0.8]{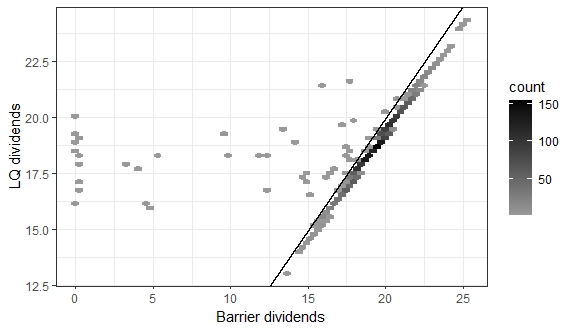}
    \caption{Hexbin plot of 2500 outcomes of the present value of dividends for the barrier strategy and LQ optimal strategy. The black line is the 45 degrees line.}
    \label{Fig: hexplot AFS b}
\end{figure}

\begin{figure}[H]
    \centering
    \includegraphics[scale = 0.8]{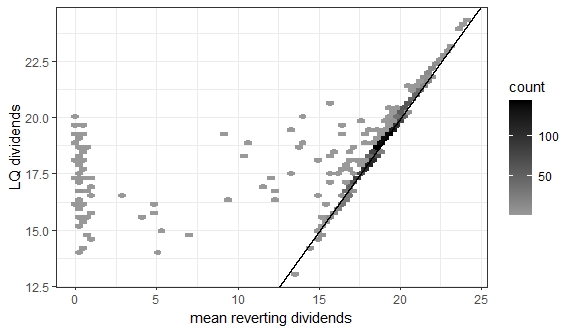}
    \caption{Hexbin plot of 2500 outcomes of the present value of dividends for the mean reverting strategy and LQ optimal strategy. The black line is the 45 degrees line.}
    \label{Fig: hexplot AFS AW}
\end{figure}

In most cases the LQ problem results in a present value of future dividends very close to the present value of future dividend, obtained by using the affine dividend strategy that maximize this quantity. This is illustrated in a violin plot in Figure \ref{Fig: violinplot} of the difference in the present value of dividends between the LQ strategy and mean reverting strategy. The violin plot is cut-off at -2, and do not show 83 cases out of the 137 cases, where the mean reverting strategy leads to ruin. Furthermore, the LQ problem minimize the chance of ruin, but this is done by capital injections, which is not desirable.

\begin{figure}[H]
    \centering
    \includegraphics[scale = 0.8]{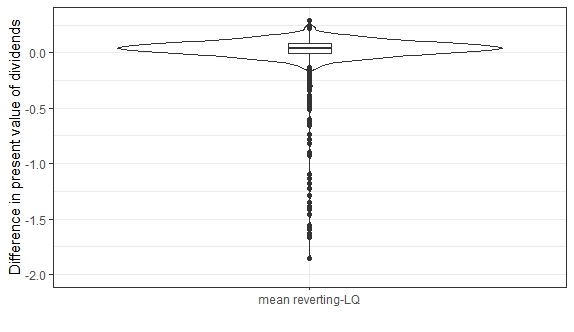}
    \caption{Violin plot that illustrate the distributions of 2500 simulated difference between present values of future dividends for the mean reverting strategy and the LQ strategy. }
    \label{Fig: violinplot}
\end{figure}

The Tables \ref{Table: diff x}, \ref{Table: diff sigma} and \ref{Table: diff delta} state mean and standard error of the 2500 simulations of present value of future dividends for each of the three dividend strategies for varying initial surplus, $\varsigma$ and $\tilde{\delta}$. We see that the present value of future dividends is in general higher for the barrier strategy, but the values for the mean reverting strategy and the LQ strategy are close. We also see that the standard error is smaller for the LQ strategy for all the different parameters, such that the deviation of the present value of future dividends is smaller for different simulation.

\begin{table}[H]
\begin{center}
\caption{Present value of future dividends varying initial surplus $x$, where $c=1, \tilde{\delta} =0.05, \sigma = 0.5$}
\label{Table: diff x}
\begin{tabular}{ l| l  l  l | l  l  l | l  l}
\hline
\textbf{x} & & mean &  & &  sd &  \\
& LQ & mean reverting & barrier  &  LQ & mean reverting & barrier & $b^*$ & $g^*$\\ \hline
0.1 b* & 18.054 & 11.441  & 12.483 & 1.511  & 8.879 & 9.139 & 1.256 &  0.465\\ 
0.5 b* & 18.727 & 18.279 & 19.179  & 1.522  & 3.231 & 2.381 & 1.256 & 0.531\\ 
1 b* &  19.502 & 19.323 & 20.018 & 1.512  & 2.096  & 1.859 & 1.256 & 0.560\\ 
1.5 b* & 20.077 & 19.900  & 20.614 & 1.556  & 1.99 & 1.816 & 1.256 & 0.570 \\ 
2 b* & 20.740 & 20.456 & 21.31  & 1.530 & 2.249 & 1.763  & 1.256 & 0.576\\ 
\hline
\end{tabular}
\end{center}
\end{table}

\begin{table}[H]
\begin{center}
\caption{Present value of future dividends varying $\varsigma$, , where $c=1, \tilde{\delta} =0.05, x = 0.5 b^* $}
\label{Table: diff sigma}
\begin{tabular}{ l| l  l  l | l  l  l | l  l}
\hline
\textbf{$\varsigma$}  & & mean &  & &  sd &\\
& LQ & mean reverting & barrier  &  LQ & mean reverting & barrier & $b^*$ & $g^*$\\ \hline
0.1  & 19.857 & 19.836  & 19.908 & 0.317  & 0.705 & 0.323 & 0.083 &  7.519\\ 
0.5  & 18.709 & 18.173 & 19.162  & 1.485  & 3.440 & 2.436 & 1.256 & 0.531\\ 
1  &  16.850 & 15.801 & 18.091 & 3.039  & 5.543  & 4.563 & 3.563 & 0.192\\
\hline
\end{tabular}
\end{center}
\end{table}

\begin{table}[H]
\begin{center}
\caption{Present value of future dividends varying $\tilde{\delta}$, where $c=1, \sigma =0.5, x = 0.5 b^* $ }
\label{Table: diff delta}
\begin{tabular}{ l| l  l  l | l  l  l | l  l}
\hline
\textbf{$\tilde{\delta}$}  & & mean &  & &  sd &\\
& LQ & mean reverting & barrier  &  LQ & mean reverting & barrier & $b^*$ & $g^*$\\ \hline
0.01  & 97.550 & 96.849  & 98.04 & 3.496 & 8.484 & 7.926 & 5.175 &  2.66\\ 
0.05  & 18.798 & 18.263 & 19.233  & 1.583  & 3.493 & 2.575 & 1.256 & 1.884\\ 
0.1  &  9.036 & 8.688 & 9.377 & 1.068  & 2.062  & 1.562 & 1.075 & 1.543\\
\hline
\end{tabular}
\end{center}
\end{table}

\subsection{The cost of smoothing dividends}

The expected present value of future dividends until ruin with the barrier strategy, $V^b$, is in general larger than the expected present value of future dividends with the LQ strategy, $V^{LQ}$, where the benchmarks are the parameters from the mean reverting strategy. The additional amount of initial surplus required for $V^{LQ}$ to be equal to $V^b$ is the cost of smoothing dividends as compared to the optimal barrier strategy. We calculate the extra amount of initial surplus,that we would need in order to achieve same expected value of future dividends. Hence we want to solve the following equation for $\xi$
\begin{align*}
    V^{LQ}(t, X(t)+\xi(t)) =V^b(t, X(t)).
\end{align*}
By the quadratic representation of $V^{LQ}$ we get that
\begin{align*}
    \xi(t) = \frac{V^b(t,X(t))-g(t)-f(t)X(t)}{f(t)},
\end{align*}
where 
\begin{align*}
    V^b(t,X(t))=\frac{e^{rX(t)}-e^{sX(t)}}{re^{rb^*}-se^{sb^*}}
\end{align*}
for the roots of $\frac{1}{2} \varsigma^2 z + c z - \tilde{\delta} = 0$
\begin{align*}
    r=& \frac{-c + \sqrt{c^2+2\tilde{\delta}\varsigma^2}}{\varsigma^2},\\
    s=& \frac{-c - \sqrt{c^2+2\tilde{\delta}\varsigma^2}}{\varsigma^2}.
\end{align*}
Figure \ref{Fig: initdiff} shows $\xi$ as a function of the initial surplus. The cost, $\xi$, is negative for very small values of $X(0)$, where the surplus with the barrier strategy is likely to ruin. It is not a fair comparison for small values of $X(0)$, as the LQ objective does not stop paying dividends after ruin and are likely to make capital injections. Apart from small values of $X(0)$, the additional amount of initial surplus needed for $V^{LQ}$ to be equal to $V^b$ increases as the initial surplus does. That is because the barrier strategy can pay an immediate dividend if the initial surplus is unnecessarily high. The LQ strategy can only pay proportionally. In those cases (high initial surplus) we expect this cost to be smaller for smaller $\delta$, larger risk, and/or larger payout targets.

\begin{figure}[H]
    \centering
    \includegraphics[scale = 0.7]{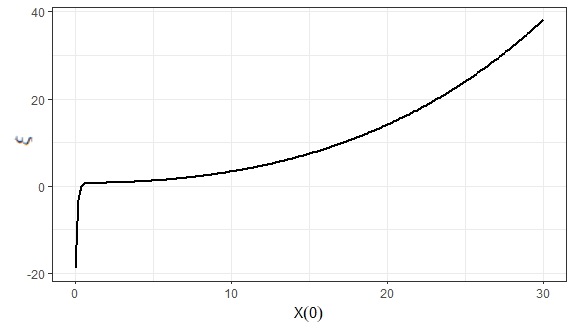}
    \caption{The extra amount of initial surplus, we would need in order to achieve same expected value of future dividends, considered as a function of the initial surplus.}
    \label{Fig: initdiff} 
\end{figure}




\section{Conclusion}

In this paper, we consider a very general surplus process that is modelled with two-sided jump Lévy dynamics. We control dividend payments in order to minimise quadratic differences to both payout and surplus level benchmarks, which can be altered according to a number of optional parameters.

We show that the dividend strategy that will minimise this LQ objective (quadratic differences) is of affine type (in the current surplus level). We futher characterise the optimal strategy parameters as the solutions of a well-defined system of ODEs. Remarkably, the optimal strategy is also shown to be the same as in the pure diffusion framework.

This is of particular interest, because there is evidence for companies to target payout ratios, and surplus levels, and the actual dividend strategies observed in practice are of affine type. Hence, we managed to create a framework, whereby objective and optimal control are \textit{both} realistic. This is generally not the case.

Finally, we calculate the expected present value of dividends when the optimal affine strategy is applied in a pure diffusion context. This is subsequently used to qualitatively and quantitatively compare results of the LQ optimisation with the classical dividend maximisation objective, thanks to simulations and some additional mathematical calculations.

\section*{Acknowledgments and declarations of interest }

Part of this work was done when Avanzi was visiting Steffensen the University of Copenhagen, and when Kusch Falden was visiting Avanzi at the University of Melbourne. The hospitality of the host institutions is gratefully acknowledged.

This research was  supported under the Australian Research Council Discovery (DP200101859) Project funding scheme. The views expressed herein are those of the authors and are not necessarily those of the supporting organisations. 

This research was funded by Innovation Fund Denmark award number 7076-00029. No potential conflict of interest was reported by the authors.

\section*{References}


\appendix

\section{Proof of Proposition \ref{prop: HJB equation}} \label{App: Proof of HJB equation}
The proof of the continuous parts is given in \citet[Chapter 19, Theorem 19.5]{Bj09}. Hence, it suffices to prove the result for the jumps, where
\begin{align*}
    c(t)=l(t,X(t))=l_0(t)=l_1(t)=\Gamma(t)=\varsigma=0,
\end{align*}
for all $t\geq 0$. Assume also initially that $\delta=0$ and let $(t,x)\in[0,T]\times \RR$ be fixed and define
\begin{align*}
&    \hat{i}(s,y) = \begin{cases}
i(s,y), \quad\quad (s,y) \in [t,t+h]\times \RR\\
i^*(s,y), \quad\quad  (s,y) \in (t+h,T]\times \RR
\end{cases}
\end{align*}
where $t+h<T$, $\bfi^*$ is the optimal strategy and $\bfi$ is an arbitrary fixed control strategy. Define
\begin{align*}
    \JJ(t,x, \hat{\bfi}) =& E_{t,x} \Bigg[  \frac{1}{2} \int_t^{T} \gamma^i(s) \hat{i}(s,X^{\hat{\textbf{i}}}(s-))^2 d N(s) + \kappa (X^{\hat{\textbf{i}}}(T)-x_T)^\tau \Bigg]\\
    = & \ 
    E_{t,x} \Bigg[  \frac{1}{2} \int_t^{t+h} \gamma^i(s) i(s,X^{\textbf{i}}(s-))^2 d N(s)\Bigg] + E_{t,x} \Bigg[ V(t+h,X^{\textbf{i}^*}(t+h))\Bigg] 
\end{align*}
where the last equation comes from partitioning the time interval and that the optimal stategy is used in the interval $(t+h,T]$. We assume $V$ is sufficiently regular and use Itôs lemma to obtain 
\begin{align*}
    V(t+h, X_{t+h}^{ \bfi}) =&\ V(t,x) +\int_{t}^{t+h}  V_t(s, X^{\bfi}(s))  \td s \\
    &\ + \int_{t}^{t+h}\bigg( V(s,X^{\bfi}(s))- V(s,X^{\bfi}(s-))  \big)\td N(s).
\end{align*}
The last term in $ \JJ(t,x, \hat{\bfi})$ is then given by
\begin{align*}
     E_{t,x}  \Bigg[  & V(t+h, X^{\textbf{i}}(t+h))\Bigg] =  V(t,x) + E_{t,x} \Bigg[ \int_{t}^{t+h}  V_t(s, X^{\bfi}(s)) \td s  \Bigg] \\
    &\ +  E_{t,x} \Bigg[ \int_t^{t+h} \lambda(s) \int_{-\infty}^{\infty} \Bigg(V(s,X^{\bfi}(s-)+y-i(s,X^{\bfi}(s-)) - V(s,X^{\bfi}(s-)) \Bigg) \td F_{Y}(y)\ \td s \Bigg]
\end{align*}
for $F_Y$ the distribution of any of the jump sizes, for instance $Y_1$, where we use the following is a martingale
\begin{align*}
    &\int_{t}^{t+h} \bigg(V(s,X^{\bfi}(s-) + \Delta X^{\bfi}(s))- V(s,X^{\bfi}(s-)) \bigg) \td N(s)\\
    &\ -  \int_t^{t+h} \lambda(s) \int_{-\infty}^{\infty} V(s,X^{\bfi}(s-)+y-i(s,X^{\bfi}(s-))- V(s,X^{\bfi}(s-))  \td F_{Y}(y)\ \td s.
\end{align*}

Note the optimal control strategy minimize $\JJ$ therefore
\begin{align*}
    V(t,x)=\JJ(t,x, \bfi^*) \leq  \JJ(t,x, \hat{\bfi}),
\end{align*}
with equality if and only if the control strategy is the optimal $\bfi=\bfi^*$. The inequality is

\begin{align*}
    V(t,x) \leq & E_{t,x} \Bigg[  \frac{1}{2} \int_t^{t+h} \gamma^i(s) i(s,X^{\textbf{i}}(s-))^2 d N(s)\Bigg]+ V(t,x)+ E_{t,x} \Bigg[ \int_{t}^{t+h}  V_t(s, X^{\bfi^*}(s)) \td s  \Bigg] \\
    &\ +  E_{t,x} \Bigg[ \int_t^{t+h}\lambda(s) \int_{-\infty}^{\infty} \Bigg(V(s,X^{\bfi}(s-)+y-i^*(s,X^{\bfi^*}(s-)) - V(s,X^{\bfi^*}(s-)) \Bigg) \td F_{Y}(y)\ \td s \Bigg]
\end{align*}
 
 Dividing by $h$ and assume sufficient regularity to consider the limit $h\rightarrow 0$ within the expectation
\begin{align*}
    0\leq &  V_t(t, x) 
+ \frac{1}{2} \lambda(t)  \gamma^i(t) i^2(t,x)  +\lambda(t) E\big[V(t, x + Y_1 - i(t,x) ) - V(t, x)\big]
\end{align*}
where we use that $X^{\bfi}(t-)=x$. Since the control strategy $\bfi$ is arbitrary, this inequality holds for all choices of control strategies, therefore also the strategy that gives the infimum of this quantity
\begin{align*}
    0\leq & \inf_{i} \bigg\{ V_t(t, x) 
+ \frac{1}{2} \lambda(t)  \gamma^i(t) i^2(t,x)  +\lambda(t) E\big[V(t, x + Y_1 - i^*(t,x) ) - V(t, x)\big] \bigg\}.
\end{align*}
We also have that infimum over an arbitrary strategy, must be smaller than any other strategy also the optimal strategy, hence, $\inf_i \JJ(t,x, \hat{\bfi}) \leq \JJ(t,x, \bfi^*)$, which gives the other inequality, and HJB equation with $\delta=0$. 

For $\delta\neq 0$ 
\begin{align*}
    V(t,x) =& \min_i E_{t,x} \Bigg[  \frac{1}{2} \int_t^{T}  e^{-\delta (s-t)}\gamma^i(s) i(s,X(s-))^2 d N(s) + e^{-\delta(T -t)}\kappa (X^{\bfi}(T)-x_T)^\tau \Bigg]\\
    =& e^{\delta( t)} \tilde{V}(t,x).
\end{align*}
By the previous results, the function $\tilde{V}(t,x)$ satisfy
\begin{align*}
    0 =  \tilde{V}_t(t, x) + \inf_i \bigg\{ e^{- \delta s}\frac{1}{2} \lambda(t)  \gamma^i(t) i^2(t,x)  +\lambda(t) E\big[\tilde{V}(t, x + Y_1 - i(t,x) ) - \tilde{V}(t, x)\big]\bigg\},
\end{align*}
where $\tilde{V}_t(t, x)= e^{-\delta t}V_t(t,x) - \delta e^{-\delta t}V(t,x) $. Multiply everthing by $e^{\delta t}$ to obtain the desired.

\section{Proof of Proposition \ref{prop: Verification}} \label{App: Proof of verification}
The proof of the continuous parts is given in \citet[Chapter 19, Theorem 19.6]{Bj09}. Hence, it suffices to prove the result for the jumps, where
\begin{align*}
    c(t)=l(t,X(t))=l_0(t)=l_1(t)=\Gamma(t)=\varsigma=0.
\end{align*}

Assume $H\in C^1$, $C^2$ except at countable many points and solves the HJB equation. Furthermore, assume for fixed $(t,x)\in[0,T]\times \RR$ that the control strategy $\tilde{i}(t,x)$ minimizes the HJB equation of $H$. 

Let $\bfi$ be an arbitrary control strategy and $X^{\textbf{i}}(s)$ be the surplus with dynamics
\begin{align*}
    \td X^{\textbf{i}}(s) =   \td S(s) - i(s,X^{\textbf{i}}(s-))\td N(s),
\end{align*}
for $t\leq s \leq T$ and $X^{\textbf{i}}(t-)=x$.

Since $H$ solves the HJB equation we have for $t\leq s \leq T$ 
\begin{align*}
0 \leq & H_t(s,X^{\textbf{i}}(s-)) - \delta H(s,X^{\textbf{i}}(s-)) \nonumber \\
    & \ + \frac{1}{2} \gamma^i(s) i^2(s,X^{\textbf{i}}(s-)) \lambda(s) \nonumber \\
     & \ +  \lambda(s) \EE \Big[H(s, X^{\textbf{i}}(s-)+ Y_{1}-i(s,X^{\textbf{i}}(s-))) - H(s, X^{\textbf{i}}(s-)) \Big], 
    \end{align*} 
  $P-a.s.$ for all possible control strategies $\bfi$. We consider the integral over $(t,T]$ for both sides of the inequality multiplied by the positive function $e^{-\delta(s-t)}$ for $t\leq s\leq T$. 
    \begin{align*}
0 \leq & \int_t^T e^{-\delta(s-t)}H_t(s,X^{\textbf{i}}(s-)) \td s -\delta \int_t^T e^{-\delta(s-t)}H(s,X^{\textbf{i}}(s-)) \td s  \\
    & \ + \frac{1}{2} \int_t^T e^{-\delta(s-t)}\gamma^i(s) i^2(s,X^{\textbf{i}}(s-)) \Big(\lambda(s) \td s - \td N(s)\Big)  + \frac{1}{2} \int_t^T e^{-\delta(s-t)} \gamma^i(s) i^2(s,X^{\textbf{i}}(s-))  \td N(s) \nonumber \\
     & \ +  \int_t^T e^{-\delta(s-t)} \lambda(s) \EE \Big[H\big(s, X^{\textbf{i}}(s-)+ Y_{1}-i(s,X^{\textbf{i}}(s-))\big) - H(s, X^{\textbf{i}}(s-)) \Big] \td s. 
    \end{align*}

Itôs lemma implies that
\begin{align*}
    e^{-\delta(T-t)} H(T, X_{T}^{ \bfi}) =&\ H(t,x)  - \delta \int_t^T e^{-\delta(s-t)}H(s,X^{\textbf{i}}(s)) \td s \\
    &\     +\int_{t}^{T}  e^{-\delta(s-t)}  H_t(s, X^{\bfi}(s)) \td s \\
    &\ + \int_{t}^{T}  e^{-\delta(s-t)} \bigg( H(s,X^{\bfi}(s))- H(s,X^{\bfi}(s-)) \bigg) \td N(s), 
\end{align*}
such that
 \begin{align*}
H(t,x) \leq &   e^{-\delta(T-t)}H(T,X^{\textbf{i}}(T))    \nonumber \\
    & \ + \frac{1}{2} \int_t^T  e^{-\delta(s-t)} \gamma^i(s) i^2(s,X^{\textbf{i}}(s-)) \Big(\lambda(s) \td s - \td N(s)\Big)\\
    &\ + \frac{1}{2} \int_t^T  e^{-\delta(s-t)}\gamma^i(s) i^2(s,X^{\textbf{i}}(s-))  \td N(s) \nonumber \\
     & \ +  \int_t^T  e^{-\delta(s-t)}\lambda(s) \EE \Big[H\big(s, X^{\textbf{i}}(s-)+ Y_{1}-i(s,X^{\textbf{i}}(s-))\big) - H(s, X^{\textbf{i}}(s-)) \Big] \td s
     \\
     & \ -  \int_{t}^{T} e^{-\delta(s-t)} \bigg( H(s,X^{\bfi}(s))- H(s,X^{\bfi}(s-)) \bigg) \td N(s) . 
    \end{align*} 

Take $\EE_{t,x}[\cdot]$ on both sides of the inequality. 

 \begin{align*}
H(t,x) \leq &  \EE_{t,x}\Bigg[ e^{-\delta(T-t)}\kappa \bigg(X^{\textbf{i}}(T)-k\bigg)^\tau\\
    & \ + \frac{1}{2} \int_t^T  e^{-\delta(s-t)}\gamma^i(s) i^2(s,X^{\textbf{i}}(s-))  \td N(s) \Bigg] \nonumber \\
    &\ +  E_{t,x} \Bigg[ \int_t^Te^{-\delta(s-t)} \lambda(s) \int_{-\infty}^{\infty} H\big(s,X^{\bfi}(s-)+y-i(s,X^{\bfi}(s-)\big)- H(s,X^{\bfi}(s-))  \td F_{Y}(y)  
     \\
     & \ -  \int_{t}^{T}e^{-\delta(s-t)} \bigg( H(s,X^{\bfi}(s))- H(s,X^{\bfi}(s-)) \bigg) \td N(s) \Bigg]\\
     &\ = \JJ(t,x,\bfi), 
    \end{align*} 
Where we use the compensated jump measure $\EE\left[\td N(s)\right] = \lambda(s)\td s$ and the compensated martingale 
\begin{align*}
    &\int_{t}^{t+h} H\big(s,X^{\bfi}(s-) + \Delta X^{\bfi}(s)\big)- H(s,X^{\bfi}(s-))  \td N(s)\\
    &\ -  \int_t^{t+h} \lambda(s) \int_{-\infty}^{\infty} H\big(s,X^{\bfi}(s-)+y-i(s,X^{\bfi}(s-)\big)- H(s,X^{\bfi}(s-))  \td F_{Y}(y)\ \td s.
\end{align*}

Since the control strategy is arbitrary we also have that
    \begin{align}
    \label{eq: first inequality}
        H(t,x) \leq \inf_{l,i}\ \JJ(t,x,\bfi) = V(t,x).
    \end{align}

For the optimal control strategy $\tilde{\bfi}$ the HJB equation implies $P$-a.s.
\begin{align*}
0 = & H_t(s,X^{\tilde{\bfi}}(s-)) - \delta H(s,X^{\tilde{\bfi}}(s-)) \nonumber \\
    & \ + \frac{1}{2} \gamma^i(s) \tilde{i}^2(s,X^{\tilde{\bfi}}(s-)) \lambda(s) \nonumber \\
     & \ +  \lambda(s) \EE \Big[H\big(s, X^{\tilde{\bfi}}(s-)+ Y_{1}-\tilde{i}(s,X^{\tilde{\bfi}}(s-))\big) - H(s, X^{\tilde{\bfi}}(s-)) \Big], 
    \end{align*} 

Consider the integral over $(t,T]$ for both sides of the equality multiplied by the positive function $e^{-\delta(s-t)}$ for $t\leq s\leq T$, and the expression of Itôs lemma
    \begin{align*}
H(t,X^{\tilde{\bfi}}(t)) = &  e^{-\delta(T-t)} H(T,X^{\tilde{\bfi}})  \\
    & \ + \frac{1}{2} \int_t^T e^{-\delta(s-t)} \gamma^i(s) \tilde{i}^2(s,X^{\tilde{\bfi}}(s-))\Big(\lambda(s) \td s - \td N(s)\Big)\\
    &\ + \frac{1}{2} \int_t^T  e^{-\delta(s-t)}\gamma^i(s) \tilde{i}^2(s,X^{\tilde{\bfi}}(s-))  \td N(s) \nonumber \\
     & \ +  \int_t^T e^{-\delta(s-t)}\lambda(s) \EE \Big[H\big(s, X^{\tilde{\bfi}}(s-)+ Y_{1}-\tilde{i}(s,X^{\tilde{\bfi}}(s-))\big) - H(s, X^{\tilde{\bfi}}(s-)) \Big] \td s
     \\
     & \ -  \int_{t}^{T}e^{-\delta(s-t)} \bigg( H(s,X^{\tilde{\bfi}}(s))- H(s,X^{\tilde{\bfi}}(s-)) \bigg) \td N(s) . 
    \end{align*}

Take $\EE_{t,x}[\cdot]$ on both sides of the equality
   \begin{align*}
H(t,x) = &  \EE_{t,x}\Bigg[ e^{-\delta(T-t)}\kappa \bigg(X^{\textbf{i}}(T)-k\bigg)^\tau  + \frac{1}{2} \int_t^T  e^{-\delta(s-t)}\gamma^i(s) \tilde{i}^2(s,X^{\tilde{\bfi}}(s-))  \td N(s) \nonumber \Bigg]\\
      &\ +  E_{t,x} \Bigg[ \int_t^Te^{-\delta(s-t)} \lambda(s) \int_{-\infty}^{\infty} H\big(s,X^{\tilde{\bfi}}(s-)+y-i(s,X^{\tilde{\bfi}}(s-)\big)- H(s,X^{\tilde{\bfi}}(s-))  \td F_{Y}(y)  
     \\
     & \ -  \int_{t}^{T}e^{-\delta(s-t)} \bigg( H(s,X^{\tilde{\bfi}}(s))- H(s,X^{\tilde{\bfi}}(s-)) \bigg)  \td N(s) \Bigg]\\
     &\ = \JJ(t,x,\tilde{\bfi}).
    \end{align*} 
We must have that
\begin{align*}
    H(t,x) = \JJ(t,x,\tilde{\bfi}) \geq \inf_{l,i} \JJ(t,x,\bfi) = V(t,x),
\end{align*}
which together with \eqref{eq: first inequality}  shows that
\begin{align*}
    H(t,x) = \JJ(t,x,\tilde{\bfi}) = V(t,x),
\end{align*}
and $\tilde{l}$ and $\tilde{i}$ are the optimal control strategies.

\section{ODEs for $q$, $p$ and $r$} \label{App: q p r}

\begin{align}
    q_t(t) =&\delta  q(t)+ 2q(t)^2- \frac{1}{2} \gamma^{x}(t) - \Big( \gamma^i(t)+2q(t) \Big) 2\lambda(t)  \frac{q(t)^2}{\big(\gamma^i(t)+2q(t)\big)^2} + 2q(t)l_1(t) \nonumber \\
   &\  +4\lambda(t)\frac{q(t)}{\gamma^i(t)+2q(t)}\label{Eq: q}\\
   p_t(t) =& \delta p(t)+\gamma(t)x_0(t) - \Big(\gamma^i(t)+q(t)\big)\lambda(t)\frac{2q(t)p_1+p(t)}{\gamma^i(t)+2q(t)} \frac{2q(t)}{\gamma^i(t)+2q(t)} \nonumber \\
   &\ -2q(t)c(t) -2q(t)\lambda(t)p_1 + 2q(t)l_0(t) + p(t)l_1(t) + 2q(t)p(t) \nonumber\\
   &\  - (1-2p_1 + p(t))\lambda(t) \frac{2q(t)}{\gamma^i(t)+2q(t)} + 2\lambda(t)\frac{2q(t)p_1+p(t)}{\gamma^i(t)+2q(t)} \label{Eq: p} \\
   r_t(t)=&\delta r(t)+ \frac{1}{2} p(t)^2 -\frac{1}{2}\gamma(t)x_0(t)^2 -\Big(\frac{1}{2}\gamma^i(t) + q(t)\Big) \lambda(t) \Big(\frac{1-2q(t)p_1-p(t)}{\gamma^i(t)+2q(t)}\Big)^2 \nonumber\\
   &\ -p(t)c(t) -p(t)\lambda(t)p_1 +p(t) l_0(t)  -2q(t)\varsigma^2 + \Big(2p_1-1 - p(t)\Big)\lambda(t) \frac{2q(t)p_1+p(t)}{\gamma^i(t)+2q(t)}. \label{Eq: r}
\end{align}
with terminal conditions
\begin{table}[H]
\begin{center}
\caption{Terminal conditions }
\label{Table: terminal conditions}
\begin{tabular}{ l| l | l | l }
\hline
 $\tau$ & 0 & 1 &  2 \\ \hline
$q(T)$ & $\Delta \Gamma(T)$ & $\Delta \Gamma(T)$  & $\Delta \Gamma(T)+\kappa$ \\ 
$p(T)$ & $-2\Delta \Gamma(T)x_0(T)$ & $-2\Delta \Gamma(T)x_0(T)+\kappa$ & $-2\Delta \Gamma(T)x_0(T)-2\kappa x_T$ \\ 
$r(T)$ &  $\Delta \Gamma(T)x_0(T)^2$ & $\Delta \Gamma(T)x_0(T)^2-\kappa x_T$ & $\Delta \Gamma(T)x_0(T)^2+\kappa x_T^2$\\ 
\hline
\end{tabular}
\end{center}
\end{table}

Remember that $\tau$ is a parameter of the terminal value constraint in \eqref{Eq: value function}; see Section \ref{SubSec: LQ objective}.

\section{Proof of Proposition \ref{Prop: Partial differential equation of PV divd}} \label{App: Proof of PV divd}

Construct a martingale $m$ as
\begin{align*}
    m(t) =&\ \EE^{\PQ} \Bigg[ \int_0^T e^{- \tilde{\delta} s} \bigg(l_0 + p(s) + \big(l_1 + 2q(s)\big) X(s) \bigg)\td s \ \Bigg| \ \mathcal{F}_t \ \Bigg] \\
    =&\ \int_0^t e^{- \tilde{\delta} s} \bigg(l_0 + p(s) + \big(l_1 + 2q(s)\big) X(s) \bigg)\td s + e^{- \tilde{\delta} t} V^{LQ}(t, X(t)).
\end{align*}

The dynamics of $m$ are
\begin{align*}
    \td m(t) =&\ e^{- \tilde{\delta} t} \Big( l_0 + p(t) + \big(l_1 + 2q(t)\big) X(t)  -  \tilde{\delta} V^{LQ}(t, X(t)) \td t \\
    &\ \hspace{2cm} + \td V^{LQ}(t, X(t)) \Big). 
\end{align*}

By the Itô formula, we have the dynamics  
\begin{align}
    \td V^{LQ}(t&, X(t)) \nonumber \\
    =&\ V^{LQ}_t(t, X(t)) \td t + V_x^{LQ}(t, X(t)) \Big(c- l_0 - p(t) - \big(l_1 + 2q(t)\big) X(t)\Big) \td t \nonumber \\
    &\  +\frac{1}{2} V_{xx}^{LQ}(t, X(t)) \varsigma^2 \td t + V_x^{LQ}(t, X(t)) \varsigma \td W(t)  
   \label{Eq: Dynamics V in proof of PDE} 
\end{align}

Combining this, the dynamics of $m(t)$ are
\begin{align*}
    \td m(t) =&\ e^{- \tilde{\delta} t} \Bigg( l_0 + p(t) + \big(l_1 + 2q(t)\big) X(t)  -  \tilde{\delta} V^{LQ}(t, X(t))  \\
    &\ \hspace{1cm} + \ V^{LQ}_t(t, X(t)) + V_x^{LQ}(t, X(t)) \Big(c- l_0 - p(t) - \big(l_1 + 2q(t)\big) X(t)\Big)  \nonumber \\
    &\   \hspace{1cm} +\frac{1}{2} V_{xx}^{LQ}(t, X(t)) \varsigma^2 \Bigg)\td t\\
    &\ + e^{- \tilde{\delta} t}V_x^{LQ}(t, X(t)) \varsigma \td W(t). 
\end{align*}

Since $e^{- \tilde{\delta} t}V_x^{LQ}(t, X(t)) \varsigma \td W(t)$ are the dynamics of a martingale and since $m(t)$ is a martingale, the term in front of $\td t$ in the dynamics of $m(t)$ must be equal to zero for all $t$ and $X(t)$ which results in the partial differential equation for the expected present value of future dividends. By the expression og $V^{LQ}$ the boundary condition of the partial differential equation is $V^{LQ}(T, x) = 0$.

Now, assume that a function $\bar{V}^{LQ}(t, x)$ satisfies the partial differential equation in Equation \eqref{Eq: PDE PV divd}. We show that this function is in fact the expected present value of future dividends in Equation \eqref{Eq: PV divd}. 

The Itô formula, the dynamics from Equation \eqref{Eq: Dynamics V in proof of PDE} with $\bar{V}$ inserted instead of $V$, and the fact that $\bar{V}$ satisfies the partial differential equation in Equation \eqref{Eq: PDE PV divd} yield that
\begin{align*}
    &\ \td \Big( e^{- \tilde{\delta} t} \bar{V}^{LQ}(t, X(t)) \big) \\ 
    =&\ -\tilde{\delta} \bar{V}^{LQ}(t, X(t)) \td t + e^{- \tilde{\delta} t} \td \bar{V}^{LQ}(t, X(t)) \\
    =&\ e^{- \tilde{\delta} t} \Bigg( \Big( l_0 + p(t) + \big(l_1 + 2q(t)\Big) X(t) \big) \td t   + V_x^{LQ}(t, X(t)) \varsigma \td W(t)  \Bigg).
\end{align*}

Integrating over the interval $[t, T)$ and taking the expectation conditioning on $\mathcal{F}_t$ give that 
\begin{align*}
    &\ e^{- \tilde{\delta} T}\bar{V}^{LQ}(T, X(T)) - e^{- \tilde{\delta} t} \bar{V}^{LQ}(t, X(t)) \\
    =&\ - \EE\bigg[ \int_t^T e^{- \tilde{\delta} s} \Big( l_0 + p(s) + \big(l_1 + 2q(s)\big) X(s) \Big) \td s \ \bigg| \ \mathcal{F}_t \ \bigg],
\end{align*}

since the remaining term in the dynamics of $\bar{V}^{LQ}(t, X(t))$ is a martingale with respect to the filtration $\mathcal{F}$. Multiplying by $- \exp(- \tilde{\delta} t)$ gives that $\bar{V}^{LQ}(t, x)$ is the expected present value of future dividends.

\end{document}